\newtheorem{theorem}{Theorem}[section]
\newtheorem{corollary}[theorem]{Corollary}
\newtheorem{lemma}[theorem]{Lemma}
\newtheorem*{claim*}{Claim}
\theoremstyle{definition}
\newtheorem{remark}[theorem]{Remark}
\newtheorem*{remark*}{Remark}
\newtheorem{definition}[theorem]{Definition}
\def\E{\mathbb{E}}
\def\Rb{\mathbb{R}}
\def\Nb{\mathbb{N}}
\def\Gb{\mathbb{G}}
\def\Fc{\mathcal{F}}
\def\Pc{\mathcal{P}}  
\def\dr{\mathrm{d}}   
\def\al{\alpha}
\def\ga{\gamma}
\def\Ga{\Gamma}
\def\la{\lambda}
\newcommand{\defeq}{\mathrel{\vcenter{\baselineskip0.5ex \lineskiplimit0pt
                     \hbox{\scriptsize.}\hbox{\scriptsize.}}}%
                     =}
\def\ind{\mathbbm{1}} 
\title[Improved replica bounds for the independence ratio]{Improved replica bounds for the independence ratio of random regular graphs}
\author{Viktor Harangi}
\thanks{This work was supported by the MTA-R\'enyi Counting in Sparse Graphs ``Momentum'' Research Group, NRDI 
grant KKP 138270, and the Hungarian Academy of Sciences (J\'anos Bolyai Scholarship).}
\address{Alfr\'ed R\'enyi Institute of Mathematics, Budapest, Hungary} 
\email{harangi@renyi.hu}
\begin{document}

\maketitle

\begin{abstract}
Studying independent sets of maximum size is equivalent to considering the hard-core model with the fugacity parameter $\lambda$ tending to infinity. Finding the independence ratio of random $d$-regular graphs for some fixed degree $d$ has received much attention both in random graph theory and in statistical physics.

For $d \geq 20$ the problem is conjectured to exhibit 1-step replica symmetry breaking (1-RSB). The corresponding 1-RSB formula for the independence ratio was confirmed for (very) large $d$ in a breakthrough paper by Ding, Sly, and Sun. Furthermore, the so-called interpolation method shows that this 1-RSB formula is an upper bound for each $d \geq 3$. For $d \leq 19$ this bound is not tight and full-RSB is expected. 

In this work we use numerical optimization to find good substituting parameters for discrete $r$-RSB formulas ($r=2,3,4,5$) to obtain improved rigorous upper bounds for the independence ratio for each degree $3 \leq d \leq 19$. As $r$ grows, these formulas get increasingly complicated and it becomes challenging to compute their numerical values efficiently. Also, the functions to minimize have a large number of local minima, making global optimization a difficult task.
\end{abstract}


\section{Introduction}

This paper is concerned with the independence ratio of random regular graphs. A graph is said to be \emph{regular} if each vertex has the same degree. For a fixed degree $d$, let $\Gb(N,d)$ be a uniform random $d$-regular graph on $N$ vertices. Note that $\Gb(N,d)$ has a ``trivial local structure'' in the sense that with high probability almost all vertices have the same local neighborhood: $\Gb(N,d)$ almost surely \emph{converges locally} to the $d$-regular tree $T_d$ as $N \to \infty$. In statistical physics $T_d$ is also known as the \emph{Bethe lattice}. In fact, M\'ezard and Parisi used the expression Bethe lattice for referring to $\Gb(N,d)$ (see \cite{mezard2001bethe} for example), and proposed to study various models on these random graphs.

In a graph, an \emph{independent set} is a set of vertices, no two of which are adjacent, that is, the induced subgraph has no edges. The \emph{independence ratio} of a graph is the size of its largest independent set normalized by the number of vertices. For any fixed degree $d \geq 3$, the independence ratio of $\Gb(N,d)$ is known 
to converge to some constant $\alpha^\ast_d$ as $N \to \infty$ \cite{bayati2013combinatorial}. Determining $\alpha^\ast_d$ is a major challenge of the area. The cavity method, a non-rigorous statistical physics tool, led to a 1-step replica symmetry breaking (1-RSB) formula for $\alpha^\ast_d$. The authors of \cite{barbier2013hardcore} also argued that the formula may be exact for $d \geq 20$, which is widely believed to be indeed the case. Later this 1-RSB formula was confirmed to be exact for (very) large $d$ in the seminal paper of Ding, Sly, and Sun \cite{ding2016maximum}. 

Lelarge and Oulamara \cite{lelarge2018replica} used the \emph{interpolation method} to rigorously establish\footnote{Their work builds on \cite{franz2003replica,franz2003replica_non-poissonian,panchenko2004bounds}. In fact, \cite{barbier2013hardcore} already says that the 1-RSB formula is known to be an upper bound due to the Franz--Leone--Panchenko--Talagrand theorem.} the 1-RSB formula as an upper bound for every $d \geq 3$. This approach also provides $r$-step RSB bounds for any $r \geq 2$. The problem is that these formulas get increasingly complicated and fully solving the corresponding optimization problems seems to be out of reach. Can we, at least, get an estimate or a bound? 

The parameters of the $r$-RSB bound includes a \emph{functional order parameter} which can be thought of as a measure.\footnote{More precisely, as a measure that is supported on measures that are supported on measures that ...\ (iterated $r$ times).} The optimal measure satisfies a certain self-consistency equation. We cannot hope for an exact solution so the natural instinct is to try to find an approximate solution. In the physics literature an iterative randomized algorithm called \emph{population dynamics} is often used to find an approximate solution in the 1-RSB (and occasionally in the 2-RSB) setting for various models. This sounds like a promising approach but we came to the surprising conclusion that for the hard-core model it may be a better strategy to forget about the equation altogether and search among ``simple'' measures. It seems to be possible to get very close to the global optimum using atomic measures with a moderate number of atoms. Furthermore, when we only have a few atoms, we can tune their weights and locations to a great precision, and this seems to outweigh the advantage of having a more ``delicate'' measure (but being unable to tune it to the same precision).

Moreover, using a small number of atoms means that we can compute the value exactly and the interpolation method ensures that what we get is always a rigorous upper bound. In contrast, population dynamics only gives an estimate for the value of the bound because for large populations one simply cannot compute the corresponding bound precisely and has to settle for an estimate based on a sample. 

Therefore, our approach is that we try to find local minima of the discrete version (corresponding to atomic measures) using a computer. Even this is a formidable challenge as we will see. Table \ref{table:intro_best} shows the best bounds we found via numerical optimization.

\begin{table}[h!] 
\caption{Upper bounds for the asymptotic independence ratio $\alpha^\ast_d$, $3 \leq d \leq 8$. We indicated which $r$-RSB formula yielded our bound. Note that this is not our estimate for the exact $r$-RSB value, but the best upper bound we found.}
\centering
\begin{tabular}{l|c|c|c|c|c|c}
 & $d=3$ & $d=4$ & $d=5$ & $d=6$ & $d=7$ & $d=8$ \\
 \hline
1-RSB & $0.45085966$ & $0.41119457$ & $0.37926817$ & $0.35298455$ & $0.33088436$ & $0.31197257$ \\
\hline
$r$-RSB & $0.45078521$ & $0.41109414$ & $0.37917031$ & $0.35289949$ & $0.33081722$ & $0.31192223$ \\
 & \scriptsize (5-RSB) & \scriptsize (4-RSB) & \scriptsize (4-RSB) & \scriptsize (4-RSB) & \scriptsize (4-RSB) & \scriptsize (3-RSB) 
\end{tabular}
\label{table:intro_best}
\end{table}

These may seem to be small improvements but we actually expect the true values to be fairly close to our new bounds. 
In particular, for $d=3$ it is reasonable to conjecture that the bound is sharp up to at least five decimal digits, that is, $\alpha^\ast_3 = 0.45078...$.

We also have improvements for $9 \leq d \leq 19$. However, as the degree gets closer to the threshold $d \geq 20$ (above which 1-step replica symmetry breaking is believed to be the truth), the 1-RSB bound gets sharper and our improvement gets smaller. For more details, see the tables in the Appendix.

\subsection{Upper bound formulas}

In Section \ref{sec:formulas} we will explain the RSB bounds in detail. Here we only display a few formulas in order to give the reader an idea of the optimization tasks we are faced with.

For comparison, we start with the replica symmetric (RS) bound: for any $\la>0$ and any $x \in [0,1]$ we have 
\begin{equation} \label{eq:rs}
\alpha^\ast_d \log \la \leq \log\big( 1+\la(1-x)^d \big) 
- \frac{d}{2} \log(1-x^2) .
\end{equation}
Here the fugacity parameter $\la$ is the ``reward'' for including a vertex in the independent set, while $x$ can be thought of as the probability that a \emph{cavity}\footnote{Cavities are vertices of degree $d-1$ that are created by deleting a relatively small number of edges from the graph. The point is that this way the symmetry is broken while the independence ratio is essentially unchanged.} vertex is included. Then the right-hand side expresses the change in the \emph{free energy} when adding a star (i.e., connecting a new vertex to $d$ cavities) versus adding $d/2$ edges between cavities. Choosing $\la$ and $x$ optimally leads to the exact same formula as the Bollob\'as bound from 1981 \cite{bollobas1981independence}, which was based on a first moment calculation for the number of independent sets of a given size. Actually, this relatively simple bound is already asymptotically tight: $\big( 2 + o_d(1) \big) \frac{\log d}{d}$, where the asymptotic lower bound is due to Frieze and {\L}uczak \cite{frieze1992independence}.

The 1-RSB bound says that for any $\la_0>1$ and any $q \in [0,1]$:
\begin{equation} \label{eq:1rsb}
\alpha^\ast_d \log(\la_0) \leq \log\big( 1+(\la_0-1)(1-q)^d \big)
- \frac{d}{2} \log\big( 1- (1-1/\la_0) q^2 \big) .
\end{equation}
Choosing $\la_0$ and $q$ optimally leads to an (implicit) formula for $\alpha^\ast_d$. As we mentioned, this 1-RSB bound is conjectured to be sharp for any $d \geq 20$ and known to be sharp for sufficiently large $d$.

Heavy notation would be needed to describe the $r$-step RSB bounds in general. In order to keep the introduction concise, we only give (a discretized version of) the formula for the case $r=2$: for any $\la_0 >1$, $0<m<1$, and any $p_1, \ldots, p_n, q_1, \ldots, q_n \in [0,1]$ with $p_1 + \cdots + p_n = 1$ we have
\begin{multline} \label{eq:2rsb_discrete}
\alpha^\ast_d \, m \log(\la_0) \leq 
\log \sum_{i_1=1}^n \cdots \sum_{i_d=1}^n \, 
\bigg( \prod_{\ell=1}^d p_{i_\ell} \bigg) 
\bigg( 1+(\la_0-1) 
\prod_{\ell=1}^d (1-q_{i_\ell}) \bigg)^m \\ 
- \frac{d}{2} \log \sum_{i_1=1}^n \sum_{i_2=1}^n p_{i_1} p_{i_2} 
\bigg( 1- (1-1/\la_0) q_{i_1} q_{i_2} \bigg)^m .
\end{multline}

The number of parameters for general $r$ is roughly $2 n_1 \ldots n_{r-1}$, where $n_k$ denotes the number of atoms used at the different layers, so the dimension of the parameter space grows exponentially in $r$, see Section \ref{sec:discrete} for details.

\subsection{The case of degree 3}
One can plug any concrete choice of parameter values into \eqref{eq:2rsb_discrete} to get a bound for the independence ratio. To demonstrate the strength of \eqref{eq:2rsb_discrete} even for small $n$, we include here an example for a 2-RSB bound for $d=3$, $n=4$: the values
\begin{align*}
\la_0 &= 19.3 & \quad 
p_1 &= 0.2493 & \quad 
p_2 &= 0.2778 & \quad 
p_3 &= 0.2880 & \quad  
p_4 &= 0.1849\\
m &= 0.557 & \quad 
q_1 &= 0.1184 & \quad 
q_2 &= 0.5947 & \quad  
q_3 &= 0.8876 & \quad  
q_4 &= 0.9827
\end{align*}
give a bound $\alpha^\ast_3 < 0.450789952<0.45079$ that already comfortably beats the currently best bound ($\approx 0.45086$). Table \ref{table:intro_deg3} shows our best bounds for $d=3$.
\begin{table}[h!] 
\caption{The degree $3$ case: our best $r$-RSB bounds for $\alpha^\ast_3$ for $r=2,3,4,5$ compared to previous upper bounds.}
\centering
\begin{tabular}{l|l|l} 
RS/first moment & 
$0.45906$ & Bollob\'as \cite{bollobas1981independence}\\
McKay bound & 
$0.45537$ & McKay \cite{mckay1987independent}\\
1-RSB & 
$0.45085966$ & Lelarge--Oulamara \cite{lelarge2018replica}\\
2-RSB & 
$0.45078994$ & \\
3-RSB & 
$0.45078602$ & \\
4-RSB & 
$0.45078535$ & \\
5-RSB & 
$0.45078521$ & 
\end{tabular}
\label{table:intro_deg3}
\end{table}

As for lower bounds for small $d$, the best results have been achieved by so-called \emph{local algorithms}. Table \ref{table:lower_deg3} lists a few selected works and the obtained bounds for $\alpha^\ast_3$.
\begin{table}[h!] 
\caption{Lower bounds on $\alpha^\ast_3$. Note that the bounds marked with $\dagger$ cannot be considered fully rigorous as they require some kind of computer simulation or estimation.}
\centering
\begin{tabular}{l|l} 
$0.4328$ & Hoppen \cite{hoppen_thesis}\\
$0.4352$ & Kardo{\v{s}}--Kr\'al--Volec \cite{kardos2011fractional}\\
\makecell[l]{$0.4361$ \\ $0.4380^\dagger$} & Cs\'oka--Gerencs\'er--Harangi--Vir\'ag \cite{csoka2015invariant}\\
$0.4375^\dagger$ & Hoppen--Wormald \cite{hoppen2018local}\\
$0.4453^\dagger$ & Cs\'oka \cite{csoka2016independent}
\end{tabular}
\label{table:lower_deg3}
\end{table}

Note that a beautiful result of Rahman and Vir\'ag \cite{rahman2017local}, building on a work of Gamarnik and Sudan \cite{gamarnik2014limits}, says that asymptotically (as $d \to \infty$) local algorithms can only produce independent sets of half the maximum size (over random regular graphs). For small $d$, however, the independence ratio produced by local algorithms may be the same as (or very close to) $\alpha^\ast_d$.

\subsection{Optimization}
We wrote Python/SAGE codes to perform the numerical optimization for the replica bounds.
\begin{itemize}
\item The first task was to efficiently compute the $r$-RSB formulas and their derivatives w.r.t.\ the parameters.
\item Then we used standard algorithms to perform local optimization starting from random points. As the parameter space grows, more attempts are required to find an appropriate starting point leading to a good local optimum.
\item Eventually we start to encounter a \emph{rugged landscape} with a huge number of local minima, where we cannot expect to get close to the global optimum even after trying a large number of starting points. In order to overcome this obstacle, for $d=3$ we used a technique called \emph{basin hopping}. In each step, the algorithm randomly visits a ``nearby'' local minimum, favoring steps to smaller values. This approach led to the discovery of our best bounds for $d=3$. 
\item The smaller the degree $d$, the deeper we could go in the replica hierarchy (i.e., use larger $r$). We could perform the 3-RSB optimization for $d \leq 10$, the 4-RSB optimization for $d \leq 6$, and the 5-RSB optimization for $d=3$. 
\end{itemize}
See Section \ref{sec:impl} for further details about the implementation.

Although the bounds are hard to find, they are easy to check: one simply needs to plug the specific parameter values into the given formulas. We created a website with interactive SAGE codes where the interested reader may check the claimed bounds and even run simple optimizations: \url{https://www.renyi.hu/~harangi/rsb.htm}. Our codes can be found in the public GitHub repository \url{https://github.com/harangi/rsb}.

\subsection{2-RSB in the literature}

As far as we know, there was only one previous attempt to get an estimate for the 2-RSB formula (only for $d=3$). In \cite{barbier2013hardcore} it reads that ``the 2-RSB calculation is [...] somewhat involved and was done in \cite{rivoire2005thesis} [and obtained the value] $0.45076(7)$''. Rivoire's thesis \cite{rivoire2005thesis} indeed reports briefly of a 2-RSB calculation. Note that, since he considers the equivalent vertex-cover problem (concerning the complements of independent sets), we need to subtract his value from $1$ to get our value. On page 113 he writes that using population dynamics he obtained the following estimate: $0.54924 \pm 0.00007$. For our problem this means $0.45076 \pm 0.00007 = [0.45069,0.45083]$. The value $0.45076(7)$ in \cite{barbier2013hardcore} may have come from mistakenly using an error $\pm 0.000007$ instead of $\pm 0.00007$ when citing Rivoire's work. The thesis only provides a short description of how this estimate was obtained. The author refers to it as ``unfactored'' 1-RSB and it seems to be the same as what we call a non-standard 1-RSB in our remarks after Theorem \ref{thm:r-rsb}. If that is indeed the case, then our findings suggest that its true value should actually be around $0.45081$.

\subsection*{Outline of the paper}
In Section \ref{sec:formulas} we present the general replica bounds and their discrete versions that we need to optimize. Section \ref{sec:num_opt} contains details about the numerical optimization. In Section \ref{sec:revisited} we revisit the $r=1$ case and investigate more sophisticated choices for the functional order parameter. The Appendix contains a table listing our best bounds for different values of $d$ and $r$ (Section \ref{sec:table}) and an overview of the interpolation method for the hard-core model over random regular graphs (Section \ref{sec:ip}). 

\section{Replica formulas} \label{sec:formulas}

Originally the cavity method and belief propagation were non-rigorous techniques in statistical physics to predict the free energy of various models. They inspired a large body of rigorous work, and over the years several predictions were confirmed. In particuler, the so-called \emph{interpolation method} has been used with great success to establish rigorous upper bounds on the free energy. 

In the context of the hard-core model over random $d$-regular graphs, the interpolation method was carried out by Lelarge and Oulamara in \cite{lelarge2018replica}, building on the pioneering works \cite{franz2003replica,franz2003replica_non-poissonian,panchenko2004bounds}. First we present the general $r$-step RSB bound obtained this way.

\subsection{The general replica bound} \label{sec:r-rsb}
For a topological space $\Omega$ let $\Pc(\Omega)$ denote the space of Borel probability measures on $\Omega$ equipped with the weak topology. We set $\Pc^1 \defeq \Pc\big( [0,1] \big)$ and then recursively $\Pc^{k+1} \defeq \Pc\big( \Pc^{k} \big)$ for $k \geq 1$. The general bound will have the following parameters:
\begin{itemize}
\item $\la>1$;
\item $0< m_1, \ldots, m_r <1$ corresponding to the so-called Parisi parameters;
\item a measure $\eta^{(r)} \in \Pc^r$.
\end{itemize}
\begin{definition} \label{def:recursive_sampling}
Given a fixed $\eta^{(r)} \in \Pc^r$, we choose (recursively for $k=r-1,r-2,\ldots,1$) a random $\eta^{(k)} \in \Pc^k$ with distribution $\eta^{(k+1)}$. Finally, given $\eta^{(1)}$ we choose a random $x \in [0,1]$ with distribution $\eta^{(1)}$. In fact, we will need $d$ independent copies of this random sequence, indexed by $\ell \in \{1,\ldots, d\}$. Schematically:
\[ \eta^{(r)} \, \to \, \eta_\ell^{(r-1)} \, \to \, \cdots \, \to \, 
\eta_\ell^{(1)} \, \to \, x_\ell \quad (\ell= 1,\ldots, d) .\] 

For $1 \leq k \leq r$ we define $\Fc_k$ as the $\sigma$-algebra generated by 
$\eta_\ell^{(r-1)}, \ldots, \eta_\ell^{(k)}$, $\ell=1,\ldots,d$, and by $\E_k$ we denote the conditional expectation w.r.t.\ $\Fc_k$. Note that $\Fc_r$ is the trivial $\sigma$-algebra and hence $\E_r$ is simply $\E$.

Given a random variable $V$ (depending on the variables $\eta_\ell^{(k)}, x_\ell$), let us perform the following procedure: raise it to power $m_1$, then apply $\E_1$, raise the result to power $m_2$, then apply $\E_2$, and so on. In formula, let $T_0 V \defeq V$ and recursively for $k=1, \ldots, r$ set 
\[ T_k V \defeq \E_k (T_{k-1} V)^{m_k} .\]
In this scenario, applying $\E_k$ means that, given $\eta_\ell^{(k)}$, $\ell=1,\ldots,d$, we take expectation in $\eta_\ell^{(k-1)}$, $\ell=1,\ldots,d$ (or in $x_\ell$ if $k=1$). 
\end{definition}

Now we are ready to state the $r$-RSB bound given by the interpolation method.
\begin{theorem} \label{thm:r-rsb}
Let $r \geq 1$ be a positive integer and $\la, m_1, \ldots, m_r, \eta^{(r)}$ parameters as described above. Let $x_\ell$, $\ell=1,\ldots,d$ denote the random variables obtained from $\eta^{(r)}$ via the procedure in Definition \ref{def:recursive_sampling}. Then we have the following upper bound for the asymptotic independence ratio $\alpha^\ast_d$ of random $d$-regular graphs:
\begin{equation*}
\alpha^\ast_d \, m_1 \cdots m_r \log \la 
\leq \log T_r \big( 1+ \la(1-x_1)\cdots(1-x_d) \big) 
- \frac{d}{2} \log T_r (1-x_1 x_2) .
\end{equation*}
\end{theorem}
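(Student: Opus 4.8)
The plan is to obtain the $r$-RSB bound as the limit of the interpolation scheme of Franz--Leone--Panchenko--Talagrand adapted to the hard-core model on random $d$-regular graphs, following Lelarge--Oulamara. First I would set up the interpolating model: for a parameter $t \in [0,1]$, one interpolates between the random $d$-regular graph (at $t=0$, or $t=1$, depending on convention) and a purely ``tree-like'' decoupled system built from the functional order parameter $\eta^{(r)}$. Concretely, one replaces a $t$-fraction of the edge constraints of the random regular graph by independent rank-one ``pinning'' terms whose strengths are governed by cavity fields sampled recursively from $\eta^{(r)}$ exactly as in Definition~\ref{def:recursive_sampling}; the hierarchical structure $\eta^{(r)} \to \eta^{(r-1)}_\ell \to \cdots \to x_\ell$ encodes the $r$ levels of replica symmetry breaking, and the Parisi parameters $m_1, \dots, m_r$ appear as the exponents in the iterated averaging operator $T_r$.

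The core of the argument is the standard interpolation computation: one writes the free energy (here the exponential generating function for independent sets weighted by $\la^{|\text{set}|}$, whose normalized log converges to $\alpha^\ast_d \log \la$ as $\la \to \infty$, by \cite{bayati2013combinatorial} and the hard-core/independent-set correspondence) of the interpolating model and differentiates in $t$. The key step is to show that this derivative has a definite sign, so that the free energy at the random-graph endpoint is bounded by the free energy at the decoupled endpoint. The sign comes from a Guerra-type convexity/Gaussian-interpolation inequality, here in its discrete (Franz--Leone) incarnation: after expanding the $t$-derivative, one is left with a sum of terms each of which, because of the hierarchical conditional-expectation structure and the fact that $0 < m_k < 1$, can be bounded using the inequality $\E(X)^{m} \le \E(X^m)$ applied level by level (or its reverse, depending on the direction of the bound), together with a telescoping identity at each layer. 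The decoupled endpoint then factorizes over vertices and edges, yielding exactly the star term $\log T_r(1 + \la(1-x_1)\cdots(1-x_d))$ (one new vertex joined to $d$ cavities) minus $\frac d2 \log T_r(1-x_1 x_2)$ (the $d/2$ edges among cavities), which is the right-hand side of the claimed inequality.

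I expect the main obstacle to be the sign of the $t$-derivative in the hierarchical setting: verifying that the combinatorial expansion of $\frac{d}{dt}$ of the interpolating free energy genuinely reorganizes into nonnegative contributions after the iterated $T_k$-operations requires carefully tracking which replicas are correlated at which level of the tree $\eta^{(r)} \to \cdots \to x_\ell$, and invoking the concavity of $u \mapsto u^{m_k}$ at each of the $r$ layers in the correct order. This is precisely the content of the Franz--Leone--Panchenko--Talagrand machinery, so rather than redo it I would cite \cite{franz2003replica,franz2003replica_non-poissonian,panchenko2004bounds} and \cite{lelarge2018replica} and, in the appendix overview of the interpolation method (Section~\ref{sec:ip}), spell out only the pinning construction and the endpoint evaluation specific to the hard-core model, leaving the general sign lemma as a black box. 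A secondary technical point is the passage to the $\la \to \infty$ (zero-temperature) limit: one should check that the bound, proved first for finite $\la$ with the hard-core partition function, survives division by $\log \la$ and the limit, which is routine given the known convergence of the finite-$\la$ free energies and of the independence ratio.
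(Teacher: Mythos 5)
Your plan matches the paper's: the result is proved by deferring to the general $r$-RSB interpolation bound of Lelarge--Oulamara \cite[Theorem 3]{lelarge2018replica} (which rests on the Franz--Leone--Panchenko--Talagrand machinery \cite{franz2003replica,franz2003replica_non-poissonian,panchenko2004bounds}), and the paper relegates its expository account of the interpolation scheme for the hard-core model to Appendix \ref{sec:ip}, exactly as you propose. One mis-attribution worth noting, though it does not undermine the plan since you treat the machinery as a black box: the sign of the $t$-derivative is not obtained by invoking concavity of $u \mapsto u^{m_k}$ level by level, but from a separate square-completion lemma after Taylor-expanding $\log(1-x)$ (cf.\ Section \ref{sec:mon}), while the iterated-power operator $T_r$ arises from a logically distinct ingredient, the Poisson--Dirichlet/Ruelle cascade identity $\E\log\sum_\gamma w_\gamma X_\gamma = \tfrac{1}{m}\log\E X_1^m$ of \cite{panchenko2004bounds}.
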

This was rigorously proved in \cite{lelarge2018replica}. They actually considered a more general setting incorporating a class of (random) models over a general class of random hypergraphs (with given degree distributions). They used the hard-core model over $d$-regular graphs as their chief example, working out the specific formulas corresponding to their general RS and 1-RSB bounds. Theorem \ref{thm:r-rsb} follows from their general $r$-RSB bound \cite[Theorem 3]{lelarge2018replica} exactly the same way as in the RS and 1-RSB case.

We should make a number of remarks at this point.
\begin{itemize}
\item Above we slightly deviated from the standard notation as the usual form of the Parisi parameters would be
\[ 0 < \hat{m}_1 < \cdots < \hat{m}_r < 1 ,\]
where $\hat{m}_k$ can be expressed in terms of our parameters $m_k$ as follows:
\[ \hat{m}_r = m_1; \quad 
\hat{m}_{r-1} = m_1 m_2; \quad 
\ldots; \quad 
\hat{m}_{1} = m_1 m_2 \cdots m_r .\]
As a consequence, the indexing of $\Fc_k$, $\E_k$, $T_k$ is in reverse order, and the definition of $T_k$ simplifies a little because raising to power $1/\hat{m}_{r-k+2}$ and then immediately to $\hat{m}_{r-k+1}$ (as done, for example, in \cite{panchenko2004bounds}) amounts to a single exponent $\hat{m}_{r-k+1}/\hat{m}_{r-k+2}=m_k$ in our setting.

\item Also, generally there is an extra layer of randomness (starting from an $\eta^{(r+1)} \in \Pc^{r+1}$) resulting in another expectation outside the $\log$. This random choice is meant to capture the local structure of the graph in a given direction. However, when the underlying graph is $d$-regular (meaning that essentially all vertices see the same graph structure locally), we do not need this layer of randomness (in principle). Therefore, in the $d$-regular case one normally chooses a trivial $\eta^{(r+1)} = \delta_{\eta^{(r)}}$. That is why we omitted $\eta^{(r+1)}$ and started with a deterministic $\eta^{(r)}$.

For $d \geq 20$, where the $1$-RSB bound is (conjectured to be) tight, the optimal choice of parameters indeed uses a trivial $\eta^{(r+1)} = \delta_{\eta^{(r)}}$ with $r$ being $1$ in this case.

For $d \leq 19$, the same choice gives us a 1-RSB upper bound (which is not tight any more). Let us call this the \emph{standard 1-RSB bound}, and, in general, we call an $r$-RSB bound \emph{standard} if it was obtained by using a deterministic $\eta^{(r)}$ at the start. Then a non-standard bound would use $\eta^{(r+1)}$ (and hence random $\eta_\ell^{(r)}$ variables). Note that a non-standard $r$-RSB bound is actually a special case of standard $(r+1)$-RSB bounds in the limit $m_{r+1} \to 0$. So even though it is possible to improve on standard $r$-step bounds by non-standard $r$-step bounds, it actually makes more sense to use the extra layer to move to $(r+1)$-step bounds instead (and use some positive $m_{r+1})$. 

\item The full RSB picture is well-understood for the famous Sherrington--Kirkpatrick model \cite{talagrand2006parisi,panchenko2013sk}, where the infimum of the $r$-RSB bound converges to the free energy as $r \to \infty$. It is reasonable to conjecture that this is the case for the hard-core model as well. There is some progress towards this in \cite{cojaoghlan2019spin} where a variational formula is obtained for $\al_d^\ast$.
\end{itemize}

\subsection{A specific choice} \label{sec:choice}
The formula in Theorem \ref{thm:r-rsb} would be hard to work with numerically because it would only give good results for very large $\la$. So we make a specific choice (similar to the one made in \cite[Section 3.2.1]{lelarge2018replica} in the case $r=1$) that may not be optimal but will allow us to use numerical optimization. We consider the limit $\la \to \infty$ and $m_1 \to 0$ in a way that $m_1 \log \la$ stays constant and $x$ is concentrated on the two-element set $\{0, 1-1/\la\}$, meaning that $\eta^{(1)}$ is a distribution $q \delta_{1-1/\la} + (1-q) \delta_0$ for some random $q \in [0,1]$.

For a fixed $\la_0>1$ let $\log \la_0 = m_1 \log \la$. First we focus on the expressions $T_1 \big( 1+ \la(1-x_1)\cdots(1-x_d) \big)$ and $T_1 (1-x_1 x_2)$. If each $x_\ell \in \{0, 1-1/\la\}$ was fixed, we would have the following in the limit as $\la \to \infty$, $m_1 \to 0$ with $m_1 \log \la = \log \la_0$:
\begin{align*}
\big( 1+ \la(1-x_1)\cdots(1-x_d) \big)^{m_1} &\to 
\begin{cases}
\la_0 & \mbox{ if each $x_\ell$ is $0$;}\\
1 & \mbox{ otherwise;} 
\end{cases} \\
(1-x_1 x_2)^{m_1} &\to 
\begin{cases}
1/\la_0 & \mbox{ if } x_1=x_2=1-1/\la ;\\
1 & \mbox{ otherwise.} 
\end{cases} 
\end{align*}
Therefore, conditioned on 
\[ \eta_\ell^{(1)} = q_\ell \delta_{1-1/\la} + (1-q_\ell) \delta_0 \]
for some deterministic $q_1, \ldots, q_d \in [0,1]$, we get 
\begin{align*}
T_1 \big( 1+ \la(1-x_1)\cdots(1-x_d) \big) &\to  
1+(\la_0-1) (1-q_1)\cdots(1-q_d) ;\\
T_1 (1-x_1 x_2) &\to 
1-(1-1/\la_0) q_1 q_2 .
\end{align*}
In the resulting formula the randomness in layer $1$ disappears along with the Parisi parameter $m_1$. After re-indexing ($k \to k-1$) we get the following corollary.
\begin{corollary} \label{cor:r-rsb}
Let $\la_0>1$ and $0< m_1, \ldots, m_{r-1} <1$. Furthermore, fix a deterministic $\pi^{(r-1)} \in \Pc^{r-1}$ and take $d$ independent copies of recursive sampling:
\[ \pi^{(r-1)} \, \to \, \pi_\ell^{(r-2)} \, \to \, \cdots \, \to \, 
\pi_\ell^{(1)} \, \to \, q_\ell \quad (\ell= 1,\ldots, d) .\] 
We define the conditional expectations $\E_k$ and the corresponding $T_k$ as before, w.r.t.~this new system of random variables. Then 
\begin{equation*}
\alpha^\ast_d \, m_1 \cdots m_{r-1} \log \la_0 
\leq \log T_{r-1} \big( 1+ (\la_0-1)(1-q_1)\cdots(1-q_d) \big) 
- \frac{d}{2} \log T_{r-1} \big( 1-(1-1/\la_0) q_1 q_2 \big) .
\end{equation*}
\end{corollary}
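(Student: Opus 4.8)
The plan is to obtain Corollary~\ref{cor:r-rsb} directly from Theorem~\ref{thm:r-rsb} by specializing the parameters in the manner described in Section~\ref{sec:choice}, and then taking the limit $\la \to \infty$, $m_1 \to 0$ with $m_1 \log \la = \log \la_0$ held fixed. First I would fix $\la_0 > 1$, the Parisi parameters $0 < m_2, \ldots, m_r < 1$ (which will become $m_1, \ldots, m_{r-1}$ after re-indexing), and a deterministic $\pi^{(r-1)} \in \Pc^{r-1}$. From these I construct a valid choice of parameters for Theorem~\ref{thm:r-rsb}: take the same $\la$, $m_1$ with $m_1 = \log\la_0 / \log\la$, keep $m_2, \ldots, m_r$ as above, and choose $\eta^{(r)} \in \Pc^r$ to be the pushforward of $\pi^{(r-1)}$ under the map that sends each bottom-level value $q \in [0,1]$ to the measure $q\,\delta_{1-1/\la} + (1-q)\,\delta_0 \in \Pc^1$. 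Concretely, in the recursive sampling of Definition~\ref{def:recursive_sampling} the variables $\eta_\ell^{(r-1)}, \ldots, \eta_\ell^{(2)}$ are distributed exactly as $\pi_\ell^{(r-2)}, \ldots, \pi_\ell^{(1)}$ from the corollary, the layer-$1$ measures are $\eta_\ell^{(1)} = q_\ell \delta_{1-1/\la} + (1-q_\ell)\delta_0$ with $q_\ell$ playing the role of the corollary's bottom variable, and then $x_\ell \in \{0, 1-1/\la\}$ is drawn from $\eta_\ell^{(1)}$.

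Next I would compute the inner layer $T_1$ explicitly for this choice. Conditioned on the $q_\ell$'s, each $x_\ell$ independently equals $1 - 1/\la$ with probability $q_\ell$ and $0$ with probability $1 - q_\ell$. For the first argument, $1 + \la(1-x_1)\cdots(1-x_d)$ equals $1 + \la\cdot\la^{-k}$ when exactly $k$ of the $x_\ell$ are $1-1/\la$; raising to power $m_1$ and letting $\la \to \infty$, $m_1 \to 0$ with $m_1\log\la = \log\la_0$ gives $\la_0$ when $k = 0$ and $1$ otherwise, so the conditional $\E_1$ converges to $1 + (\la_0 - 1)\prod_\ell (1-q_\ell)$ (since the event ``$k=0$'' has probability $\prod_\ell(1-q_\ell)$). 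For the second argument, $1 - x_1 x_2$ equals $1 - (1-1/\la)^2$ on the event $x_1 = x_2 = 1-1/\la$ (probability $q_1 q_2$) and $1$ otherwise; raising to $m_1$ and taking the same limit yields $1/\la_0$ on that event and $1$ otherwise, so the conditional $\E_1$ converges to $1 - (1 - 1/\la_0)q_1 q_2$. This matches the display in Section~\ref{sec:choice}. The only subtlety is justifying the interchange of the limit with the outer expectations $\E_2, \ldots, \E_r$ and with the two outer logarithms: since each $x_\ell$ lives in $\{0, 1-1/\la\}$, the quantities $(1 + \la(1-x_1)\cdots(1-x_d))^{m_1}$ and $(1-x_1x_2)^{m_1}$ are bounded (uniformly in $\la$ large, and in all the random variables) between fixed positive constants and $\la_0$, so dominated convergence applies at every layer; the $\log T_r$ expressions are continuous functions of bounded-away-from-zero quantities, so the limit passes through.

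Finally, after passing to the limit, the left-hand side $\alpha^\ast_d\, m_1 m_2 \cdots m_r \log\la$ becomes $\alpha^\ast_d\, (m_2 \cdots m_r)\log\la_0$ because $m_1 \log\la \to \log\la_0$, and the right-hand side becomes $\log T_r'\big(1 + (\la_0 - 1)(1-q_1)\cdots(1-q_d)\big) - \frac{d}{2}\log T_r'\big(1 - (1-1/\la_0)q_1 q_2\big)$, where $T_r'$ now denotes the iterated operation using only the Parisi parameters $m_2, \ldots, m_r$ and the expectations over layers $2$ through $r$ --- the layer-$1$ expectation and the parameter $m_1$ having been absorbed into the limit. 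Re-indexing $k \mapsto k-1$ (so $m_k$ becomes $m_{k-1}$, $\E_k$ becomes $\E_{k-1}$, $T_k$ becomes $T_{k-1}$, and $\pi^{(k)}$ replaces $\eta^{(k+1)}$) gives precisely the statement of Corollary~\ref{cor:r-rsb}. I expect the main obstacle to be purely bookkeeping: carefully tracking the reindexing so that the $\sigma$-algebras, conditional expectations, and Parisi parameters line up correctly between Theorem~\ref{thm:r-rsb} and the corollary, and spelling out the limiting argument cleanly enough that the vanishing of the $m_1$-layer is rigorous rather than merely formal. The analytic content (dominated convergence with uniform bounds) is routine once the finite-support structure of $x_\ell$ is exploited.
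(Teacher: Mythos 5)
Your proposal is correct and follows the same route the paper sketches: define $\eta^{(r)}=\eta^{(r)}_\la$ as the pushforward of $\pi^{(r-1)}$ under $q \mapsto q\,\delta_{1-1/\la}+(1-q)\,\delta_0$, apply Theorem~\ref{thm:r-rsb}, take the limit $\la\to\infty$, $m_1\to 0$ with $m_1\log\la=\log\la_0$, and re-index. You merely fill in the details (the explicit $T_1$ limit already worked out in Section~\ref{sec:choice}, the dominated-convergence justification, and the bookkeeping) that the paper's terse one-paragraph proof leaves implicit.
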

\begin{proof}
For a formal proof one needs to define an $\eta^{(r)}=\eta_\la^{(r)} \in \Pc^r$ for the fixed $\pi^{(r-1)}$ and any given $\la$ such that the corresponding $\eta_\ell^{(1)}$ is distributed as $q_\ell \delta_{1-1/\la} + (1-q_\ell) \delta_0$. Then Theorem \ref{thm:r-rsb} can be applied and we get the new formula in the limit. 
\end{proof}

\subsection{Discrete versions} \label{sec:discrete}

In our numerical computations we will use the bound of Corollary \ref{cor:r-rsb} in the special case when each distribution is discrete. 

For $r=1$ we have a deterministic $q$ and we get back \eqref{eq:1rsb}, while $r=2$ gives \eqref{eq:2rsb_discrete}. 

Let $r=3$. For any $\la_0 >1$, $0<m_1,m_2<1$, $p_i \geq 0$ with $\sum p_i=1$, $p_{i,j} \geq 0$ with $\sum_j p_{i,j}=1$ for every fixed $i$, and $q_{i,j} \in [0,1]$ we get that
\begin{align*} 
& \alpha^\ast_d \, m_1 m_2 \log(\la_0) \leq \log R^{\mathrm{star}} 
- \frac{d}{2} \log R^{\mathrm{edge}} \mbox{, where} \\
& R^{\mathrm{star}} = \sum_{i_1} \cdots \sum_{i_d} \, 
\bigg( \prod_{\ell=1}^d p_{i_\ell} \bigg) 
\left( \sum_{j_1} \cdots \sum_{j_d} \,
\bigg( \prod_{\ell=1}^d p_{i_\ell,j_\ell} \bigg) 
\bigg( 1+(\la_0-1) 
\prod_{\ell=1}^d (1-q_{i_\ell,j_\ell}) \bigg)^{m_1} \right)^{m_2} ;\\ 
& R^{\mathrm{edge}} = \sum_{i_1} \sum_{i_2} 
p_{i_1} p_{i_2}
\left( \sum_{j_1} \sum_{j_2} 
p_{i_1,j_1} p_{i_2,j_2} 
\bigg( 1-(1-1/\la_0) 
q_{i_1,j_1} q_{i_2,j_2} \bigg)^{m_1} \right)^{m_2} .
\end{align*}

For a general $r \geq 1$, we will index our parameters $p_s,q_s$ with sequences $s=\big( s^{(1)},\ldots,s^{(k)} \big)$ of length $|s|=k \leq r-1$. We denote the empty sequence (of length $0$) by $\emptyset$. Furthermore, we write $s' \succ s$ if $s'$ is obtained by adding an element to the end of $s$, that is, $|s'|=|s|+1$ and the first $|s|$ elements coincide.

Now let $S$ be some set of sequences of length at most $r-1$ such that $\emptyset \in S$. We partition $S$ into two parts $S_{\leq r-2} \cup S_{r-1}$ based on whether the length of the sequence is at most $r-2$ or exactly $r-1$, respectively.

Now the discrete version of the $r$-RSB bound has the following parameters:
\begin{itemize}
\item $\la_0>1$;
\item $0<m_1, \ldots, m_{r-1}<1$;
\item $p_{s} \geq 0$, $s \in S$, satisfying
\[ \sum_{s' \succ s} p_{s'} = 1  \mbox{ for each } s \in S_{\leq r-2} ;\] 
\item $q_{s} \in [0,1]$, $s \in S_{r-1}$. 
\end{itemize}
Now we define the distribution $\pi^{(r-1)} \in \Pc^{r-1}$ corresponding to the parameters $p_s, q_s$. Set 
\[ \pi_s \defeq q_s \in [0,1] \mbox{ for any } s \in S_{r-1} ,\]
and then, recursively for $k=r-2,r-3,\ldots,1,0$, for a sequence $s$ of length $|s|=k$ let
\[ \pi_{s} \defeq \sum_{s' \succ s} p_{s'}\delta_{\pi_{s'}} \in \Pc^{r-1-k} .\] 

We want to use Corollary \ref{cor:r-rsb} with $\pi^{(r-1)} \defeq \pi_{\emptyset} \in \Pc^{r-1}$. The obtained bound can be expressed as follows.

For any $d$-tuple $s_1,\ldots,s_d$ of sequences of length $r-1$, set 
\begin{equation} \label{eq:star_r-1}
R^{\mathrm{star}}_{s_1,\ldots,s_d} \defeq 1+ (\la_0-1)(1-q_{s'_1})\cdots(1-q_{s'_d}) ,
\end{equation}
and then, recursively for $k=r-1, r-2, \ldots, 1$, for any $d$-tuple $s_1,\ldots,s_d$ of sequences of length $k-1$ let
\begin{equation} \label{eq:star_k-1}
R^{\mathrm{star}}_{s_1,\ldots,s_d} \defeq \sum_{s'_1 \succ s_1} \cdots \sum_{s'_d \succ s_d} 
p_{s'_1} \cdots p_{s'_d} 
\big( R^{\mathrm{star}}_{s'_1,\ldots,s'_d} \big)^{m_{r-k}} .
\end{equation}
Similarly, for any pair $s_1,s_2$ of sequences of length $r-1$, set 
\[ R^{\mathrm{edge}}_{s_1,s_2} \defeq 1- (1-1/\la_0)q_{s'_1}q_{s'_2} ,\]
and then, recursively for $k=r-1, r-2, \ldots, 1$. for any pair $s_1,s_2$ of sequences of length $k-1$, let 
\[ R^{\mathrm{edge}}_{s_1,s_2} \defeq 
\sum_{s'_1 \succ s_1} \sum_{s'_2 \succ s_2} 
p_{s'_1} p_{s'_2} 
\big( R^{\mathrm{edge}}_{s'_1,s'_2} \big)^{m_{r-k}} .\]
Then the bound is 
\begin{equation} \label{eq:R_star_edge_bound}
\alpha^\ast_d \, m_1 \ldots m_{r-1} \log(\la_0) \leq \log R_{\emptyset,\ldots,\emptyset}^{\mathrm{star}} 
- \frac{d}{2} \log R_{\emptyset,\emptyset}^{\mathrm{edge}} .
\end{equation}
\begin{remark} \label{rem:par_space}
Normally we fix integers $n_1, \ldots, n_{r-1} \geq 2$ and assume that the $k$-th elements of our sequences come from the set $\{1,\ldots, n_k\}$. This way the number of free parameters (after taking the sum restrictions on the parameters $p_s$ into account) is 
\begin{equation} \label{eq:param_dim} 
(r-1) + 2 n_1 n_2 \cdots n_{r-1} .
\end{equation}
In the tables of Section \ref{sec:num} and the Appendix we will refer to such a parameter space as $[n_1, \ldots, n_{r-1}]$. 
\end{remark}
%

\section{Numerical optimization} \label{sec:num_opt}

\subsection{Numerical results} \label{sec:num}

Our starting point was the observation in \cite{barbier2013hardcore} that the 1-RSB formula for $\alpha^\ast_d$ ``is stable towards more steps of replica symmetry breaking'' only for $d \geq 20$, so it should not be exact for $d \leq 19$. Therefore the 2-RSB bound in Corollary \ref{cor:r-rsb} ought to provide an improved upper bound for some choice of $\la_0,m_1,\pi^{(1)}$. The optimal $\pi^{(1)}$ may be continuous. Can we achieve significant improvement on the 1-RSB bound even by using some atomic measure $\pi^{(1)} = \sum_{i=1}^n p_i \delta_{q_i}$? In other words, can we find good substituting values for the parameters $p_i,q_i$ of the discrete version \eqref{eq:2rsb_discrete} using numerical optimization? We were skeptical because we may not be able to use a large enough $n$ to get a good atomic approximation of the optimal $\pi^{(1)}$. Surprisingly, based on our findings it appears that even a small number of atoms may yield close-to-optimal bounds. Table \ref{table:deg3_2rsb} shows our best 2-RSB bounds for $d=3$ and for different values of $n$.

\begin{table}[h!] 
\caption{Our 2-RSB bounds for $\alpha^\ast_3$ using $n$ atoms.}
\centering
\begin{tabular}{rl} 
$[n]$ & 2-RSB bound \\
\hline
$[2]$  & $0.45080997599102$ \\
$[3]$  & $0.45079057802543$ \\
$[4]$  & $0.45078995066987$ \\
$[5]$  & $0.45078993616987$ \\
$[6]$  & $0.45078993583363$ \\
$[7]$  & $0.45078993582594$ \\
$[11]$ & $0.45078993582525$ \\
$[32]$ & $0.45078993582510$ 
\end{tabular}
\label{table:deg3_2rsb}
\end{table}

Of course, we do not know what the true infimum of the bound in Corollary \ref{cor:r-rsb} is, but our bounds seem to stabilize very quickly as we increase the number of atoms ($n$). Also, we experimented with various other approaches that would allow for better approximations of continuous distributions and they all pointed to the direction that the bounds in Table \ref{table:deg3_2rsb} are close to optimal.

This actually gave us the hope that it may not be impossible to get further improvements by considering $r$-step replica bounds for $r \geq 3$ even though the number of atoms we can use at each layer is indeed very small due to computational capacities. Table \ref{table:deg3_345rsb} shows some bounds we obtained for $d=3$ and $r \geq 3$ using different parameter spaces (see Remark \ref{rem:par_space}).

\begin{table}[h!] 
\caption{Our 3,4,5-step RSB bounds for $\alpha^\ast_3$.}
\centering
\begin{tabular}{lll} 
$r$ & $[n_1,\ldots,n_{r-1}]$ & RSB bound \\
\hline
3 & $[5,4]$     & $0.45078601768$ \\
~ & $[8,3]$     & $0.45078601734$ \\
~ & $[8,4]$     & $0.45078601720$ \\
\hline
4 & $[6,2,2]$   & $0.45078537162$ \\
~ & $[5,3,2]$   & $0.45078534630$ \\
~ & $[8,2,2]$   & $0.45078534531$ \\ 
\hline
5 & $[4,2,2,2]$ & $0.45078520944$ 
\end{tabular}
\label{table:deg3_345rsb}
\end{table}

The dimension of the parameter space \eqref{eq:param_dim} depends only on $r,n_1,\ldots,n_{r-1}$ and not on the degree $d$. However, as we increase $d$, computing $R^{\mathrm{star}}$ (see Section \ref{sec:discrete}) and its derivative takes longer and we have to settle for using smaller values of $r$ and $n_k$. At the same time, the 1-RSB formula is presumably getting closer to the truth as we are approaching the phase transition between $d=19$ and $d=20$. Nevertheless, we tried to achieve as much improvement as we could for each degree $d=3,\ldots, 19$. See the Appendix for results for $d \geq 4$.

\subsection{Implementation} \label{sec:impl}

\subsubsection{Efficient computation}

According to \eqref{eq:R_star_edge_bound}, our RSB upper bound for $\al^\ast_d$ reads as
\begin{equation} \label{eq:bound_func}
\frac{\log R_{\emptyset,\ldots,\emptyset}^{\mathrm{star}} 
- \frac{d}{2} \log R_{\emptyset,\emptyset}^{\mathrm{edge}}}{m_1 \ldots m_{r-1} \log(\la_0)} ,
\end{equation}
where $R_{\emptyset,\ldots,\emptyset}^{\mathrm{star}}$ and $R_{\emptyset,\emptyset}^{\mathrm{edge}}$ were defined recursively through $r-1$ steps, each step involving a multifold summation, see Section \ref{sec:discrete} for details. So our task is to minimize \eqref{eq:bound_func} as a function of the parameters. During optimization the function and its partial derivatives need to be evaluated at a large number of locations. So it was crucial for us to design program codes that compute them efficiently. Instead of trying to do the summations using \texttt{for} loops, the idea is to utilize the powerful array manipulation tools of the Python library NumPy. In particular, one can efficiently perform element-wise calculations or block summations on the multidimensional arrays of NumPy. 

First we show how $R_{\emptyset,\ldots,\emptyset}^{\mathrm{star}}$ can be obtained using such tools. Recall that $S_k$ contains sequences of length $k$ and we have a parameter $p_s$ for any $s \in S_1 \cup \cdots \cup S_{r-1}$ and $q_s$ for any $s \in S_{r-1}$. In particular, in the $[n_1, \ldots, n_{r-1}]$ setup we have $|S_k|=n_1 \cdots n_k$. We do the following steps.
\begin{itemize}
\item $v_k$: vector of length $|S_k|$ consisting of $p_s$, $s \in S_k$ \quad ($1 \leq k \leq r-1$).
\item $P_k$: $d$-dimensional array of size $|S_k| \times \cdots \times |S_k|$ obtained by ``multiplying'' $d$ copies of $v_k$. (Each element of $P_k$ is a product $p_{s_1}\cdots p_{s_d}$ for some $s_1,\ldots,s_d \in S_k$.)
\item $M_{r-1}$: $d$-dimensional array of size $|S_{r-1}| \times \cdots \times |S_{r-1}|$ obtained by ``multiplying'' $d$ copies of the vector consisting of $1-q_s$, $s \in S_{r-1}$, then multiply each element by $\la_0-1$ and add $1$; cf.~\eqref{eq:star_r-1}. 
\item Then, recursively for $k=r-1,r-2, \ldots,1$, given the $|S_k| \times \cdots \times |S_k|$ array $M_k$ we obtain $M_{k-1}$ as follows: we raise $M_k$ to the power of $m_{r-k}$ and multiply by $P_k$ (both element-wise), and perform a block summation: in the $[n_1,\ldots,n_{r-1}]$ setup we divide the array into $n_k \times \cdots \times n_k$ blocks and replace each with the sum of the elements in the block; cf.~\eqref{eq:star_k-1}. 
\item At the end $M_0$ will have a single element equal to $R_{\emptyset,\ldots,\emptyset}^{\mathrm{star}}$.
\end{itemize}
One can compute $R_{\emptyset,\ldots,\emptyset}^{\mathrm{edge}}$ similarly, using two-dimensional arrays this time.

Note that during the computation of $R_{\emptyset,\ldots,\emptyset}^{\mathrm{star}}$ all the $d$-dimensional arrays are invariant under any permutation of the $d$ axes. This means that the same products appear in many instances, hence the same calculations are repeated many times in the approach above. However, typically we get plenty of compensation in efficiency due to the fact that all the calculations can be done in one sweep using powerful array tools. Nevertheless, when the degree $d$ gets above $7$, we do use another approach in the 2-RSB setting $d \, \, [n]$. In advance, we create a list containing all partitions of $d$ into the sum of $n$ nonnegative integers $d=a_1+\cdots+a_n$. We also store the corresponding multinomial coefficients $\binom{d}{a_1,\ldots,a_n}$ in a vector. Then, at each function call, we go through the list of partitions and compute 
\[ p_1^{a_1}\cdots p_n^{a_n} \big(1 + (\la_0-1) (1-q_1)^{a_1}\cdots (1-q_n)^{a_n} \big)^{m_1} ,\]
storing the values in a vector. Then we simply need to take the dot product with the precalculated vector containing the multinomial coefficients.

In both approaches computing the partial derivatives with respect to the parameters ($\la$, $m_k$, $p_s$, $q_s$) is more involved but can be done using similar techniques (array manipulations and partitioning, respectively). As an example, we show how we can compute $\partial R_{\emptyset,\ldots,\emptyset}^{\mathrm{star}} / \partial p_s$ in the first approach. For a given $1 \leq \ell \leq r-1$ we will do this for all $s \in S_\ell$ at once, resulting in a vector of length $|S_\ell|$ consisting of the partial derivatives w.r.t.\ each $p_s$, $s \in S_\ell$. We will use again the vectors $v_k$ and the arrays $P_k,M_k$ obtained during the computation of $R_{\emptyset,\ldots,\emptyset}^{\mathrm{star}}$.
\begin{itemize}
\item $P'_k$: $d$-dimensional array of size $|S_k| \times \cdots \times |S_k|$ obtained by ``multiplying'' the all-ones vector of length $|S_k|$ and $d-1$ copies of $v_k$. 
\item $D_k$: $d$-dimensional array of size $|S_k| \times \cdots \times |S_k|$ obtained by (element-wise) raising $M_k$ to the power of $m_{r-k}-1$ and multiplying by $m_{r-k}$ and by $P_k$.
\item For a given $1 \leq \ell \leq r-1$ we start with $M_\ell$, raise it to the power of $m_{r-\ell}$ and multiply it by $P'_\ell$ (both element-wise) and perform a block summation for blocks of size $1 \times n_\ell \times \cdots \times n_\ell$, resulting in an $|S_\ell| \times |S_{\ell-1}| \times \cdots \times |S_{\ell-1}|$ array that we denote by $M'_{\ell-1}$.
\item Then, recursively for $k=\ell-1,\ell-2, \ldots,1$, given the $|S_\ell| \times |S_k| \times \cdots \times |S_k|$ array $M'_k$ we obtain $M'_{k-1}$ as follows: we ``stretch'' $D_k$ so that its first axis has length $|S_\ell|$ by repeating each element $|S_\ell|/|S_k|$ times (along that first axis) to get an $|S_\ell| \times |S_k| \times \cdots \times |S_k|$ array, which we multiply element-wise by $M'_k$, and perform a block summation for blocks of size $1 \times n_k \times \cdots \times n_k$.
\item At the end we get the array $M'_0$ of size $|S_\ell| \times 1 \times \cdots \times 1$. We simply need to multiply its elements by $d$ to get the partial derivatives w.r.t.\ $p_s$, $s \in S_\ell$.
\end{itemize}

\subsubsection{Local optimization}

Given a differentiable multivariate function, \emph{gradient descent} means that at each step me move in the opposite direction of the gradient at the current point, and thus (hopefully) converging to a local minimum of the function. This is a very natural strategy because we have the steepest initial descent in that direction. There are other standard iterative algorithms that also use the gradient (i.e., the vector consisting of the partial derivatives). They can make more sophisticated steps because they take previous gradient evaluations into account as well, resulting in a faster convergence to a local minimum. Since we have complicated functions for which gradient evaluations are computationally expensive, it is important for us to reach a local optimum in as few iterations as possible. Specifically, we used the \emph{conjugate gradient} and the \emph{Broyden--Fletcher--Goldfarb--Shanno algorithm}, which are both implemented in the Python library SciPy. 

With efficient gradient evaluation and fast-converging optimization at our disposal, we were able to find local optima. However, we were surprised to see that, depending on the starting point, these algorithms find a large number of different local minima of the RSB formulas. This is due to our parameterization: we only consider discrete measures with a fixed number of atoms, and the atom locations are included among the parameters.\footnote{Even in models where the Parisi functional is known to be convex, as in the SK model \cite{auffinger2015parisi}, parameterizing with atom locations (as opposed to with the measure itself) changes the notion of convexity and may lead to functions that are far from convex.} (This is what allowed us to tackle the problem numerically but it also makes the function behave somewhat chaotically.) 

It is hard to get a good picture of the behavior of a function of so many variables. To give some idea, in Figure \ref{fig:plots} we plotted the 2-RSB bound for $d=3 \, [n=5]$ over two-dimensional sections. In both cases we chose three local minima and took the plane $H$ going through them and plotted the function over $H$. (Note that the left one appears to have a fourth local minimum. However, it is only a local minimum for the two-dimensional restriction of the function and it can actually be locally improved when we are allowed to use all dimensions.) 
\begin{figure}[ht]
\centering
\includegraphics[width=6cm]{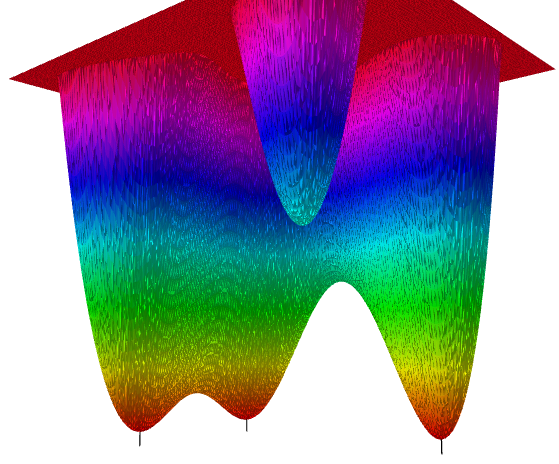} 
\includegraphics[width=6cm]{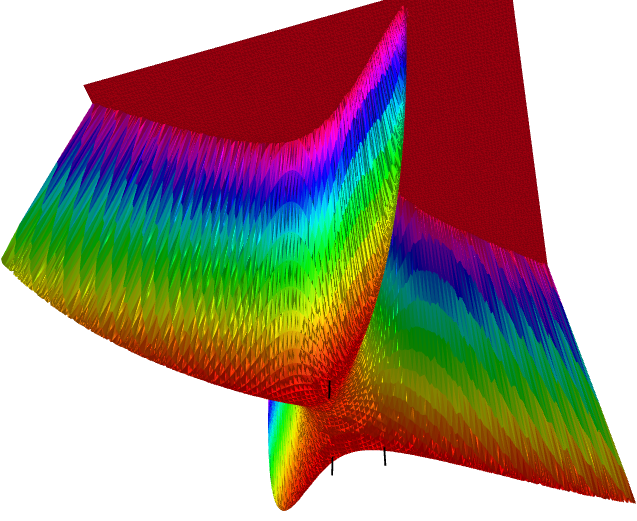} 
\caption{Plots of our 2-RSB bound over two-dimensional sections. The black ticks mark the local minima. We cut the function at a certain height.}
\label{fig:plots}
\end{figure}

Many of these local minima have very similar values. It appears that one would basically need to check them all in order to find the one that happens to be the global minimum (for the given number of atoms). So our strategy is to simply start local optimization from various (random) points to eventually get a strong bound. This seems to work well as long as there are not too many local minima.

\subsubsection{Basin hopping}
As the dimension of the parameter space grows, we start to see a landscape with a huge number of local minima and our chance for picking a good starting point becomes extremely slim. Instead, when we get to a local minimum (i.e., the bottom of a ``basin''), we may try to ``hop out'' of the basin by applying a small perturbation of the variables. After a new round of local optimization, we end up at the bottom of another basin. If the function value decreases compared to the previous basin, we accept this step. If not, then we make a random decision of acceptance/rejection with a probability based on the difference of the values. Such a basin hopping algorithm randomly travels through local minima, with a slight preference for smaller values. (This preference should not be too strong, though, as we have to allow enough leeway for this random travel.) This approach led to the discovery of our best bounds for $d=3$. We mention that in the case of our 5-RSB bound the basin hopping algorithm was running for days.

\subsubsection{Avoiding lower-depth minima}
There is one more subtlety we have to pay attention to, especially when $r \geq 3$. The fact that the $r$-RSB formula contains the $(r-1)$-RSB as a special case means that the optimization has the tendency to converge to such ``lower-depth'' local minima (on the boundary of the parameter space). So it is beneficial to distort the target function in some way in order to force the $r-1$ Parisi parameters to stay away from the boundary. That is, we need to add a penalty term to our function based on the distance of each $m_k$ from $1$. Once the function value is sufficiently small, we can continue the optimization with the original (undistorted) function.

\section{One-step RSB revisited} \label{sec:revisited}

It is possible to improve the previous best bounds even within the framework of the $r=1$ case of the interpolation method. Recall that Theorem \ref{thm:r-rsb} gives the following bound in this case:
\begin{equation*}
\alpha^\ast_d \, m \log \la 
\leq \log \E \big( 1+ \la(1-x_1)\cdots(1-x_d) \big)^m 
- \frac{d}{2} \log \E (1-x_1 x_2)^m ,
\end{equation*}
where $x_1,\ldots,x_d$ are IID from some fixed distribution $\eta$ on $[0,1]$. If we use $\eta=q \delta_{1-1/\la}+(1-q)\delta_0$ and take the limit $m \to 0, \la \to \infty$ with $m \log \la = \la_0$, then we get \eqref{eq:1rsb} as explained in Section \ref{sec:choice} for general $r$. Optimizing  \eqref{eq:1rsb} leads to what we refer to as the 1-RSB bound throughout the paper. In this section we show how one can improve on \eqref{eq:1rsb} for $d \leq 19$ by considering a more sophisticated $\eta$. We will refer to the obtained bounds as $1^+$-RSB bounds. Although this approach is generally inferior to 2-RSB bounds, it is computationally less demanding. In fact, for degrees $d=17,19$ we could only perform the 2-RSB optimization with $n_1=2$ and the obtained bound was actually worse than the $1^+$-RSB bound outlined below.

For the sake of simplicity we start with a choice of $\eta$ only slightly more general than the original one: let $\eta$ have three atoms at the locations
\[ 1-\frac{1}{\lambda^0}=0; \quad  
1-\frac{1}{\lambda^{\nicefrac12}}; \quad 
1-\frac{1}{\lambda^1} .\]
We denote the measures of these atoms by $q_0,q_{\nicefrac12}, q_1 \geq 0$, where $q_0+q_{\nicefrac12}+q_1=1$; i.e., 
\[ \eta 
= q_0\delta_0 
+ q_{\nicefrac12}\delta_{1-1/\sqrt{\la}} 
+ q_1 \delta_{1-1/\la} .\]
Note that the original choice corresponds to the case $q_{\nicefrac12}=0$.

As before, we let $m \to 0, \la \to \infty$ with $m \log \la = \la_0$, which leads to the following bound:
\begin{equation} \label{eq:1plus_half}
\alpha^\ast_d \log(\la_0) \leq \log S - \frac{d}{2} \log E ,
\end{equation}
where
\begin{align*}
S &= 1+\big(\sqrt{\la_0}-1\big)dq_0^{d-1}q_{\nicefrac12} +\big(\la_0-1\big)q_0^d; \\
E &= 1 + \big(1/\sqrt{\la_0}-1 \big)(1-q_0)^2 + \big(1/\la_0-1/\sqrt{\la_0}\big)q_1^2 .
\end{align*}
Substituting $q_1=1-q_0-q_{\nicefrac12}$, we have three remaining parameters: $\la_0, q_0, q_{\nicefrac12}$. Setting the partial derivatives of the right-hand side w.r.t.\ $q_0$ and $q_{\nicefrac12}$ to $0$, we get that 
\[ 0=\partial_{q_0} \big(\log S - \frac{d}{2} \log E \big) = \frac{\partial_{q_0} S}{S} - \frac{d}{2} \frac{\partial_{q_0} E}{E} ,\]
and similarly for $\partial_{q_{\nicefrac12}}$. It follows that for the optimal choice of parameters we have 
\[ \frac{S}{E} 
= \frac{\frac{1}{d} \, \partial_{q_0} S}{\frac{1}{2} \, \partial_{q_0} E} 
= \frac{\frac{1}{d} \, \partial_{q_{\nicefrac12}} S}{\frac{1}{2} \, \partial_{q_{\nicefrac12}} E} .\] 
One can easily compute these partial derivatives to conclude that 
\[ \frac{S}{E} = \frac{q_0^{d-1}}{ \frac{1}{\la_0} (1-q_0-q_{\nicefrac12}) } 
= \frac{(d-1)q_0^{d-2}q_{\nicefrac12} + \big( \sqrt{\la_0} + 1 \big) q_0^{d-1} }
{\frac{1}{\sqrt{\la_0}} (1-q_0) + \frac{1}{\la_0} (1-q_0-q_{\nicefrac12}) } .\]
The second equality gives 
\[ \sqrt{\la_0}=(d-1) \frac{1-q_0-q_{\nicefrac12}}{q_0} ,\] 
which turns the first equality into 
\[ q_{\nicefrac12} = \frac{d-1}{d-2}-\frac{d q_0}{d-1}-\frac{1}{(d-1)(d-2)q_0^{d-2}} . \] 
So our bound has one free parameter left ($q_0$), in which we can easily optimize numerically. 
For $d=3$ one gets $0.450851131$. 
This is the simplest way to improve upon the basic 1-RSB bound. 

More generally, one can take any measure $\tau$ on $[0,1]$ and define $\eta$ as the push-forward of $\tau$ w.r.t.\ the mapping $t \mapsto 1-1/\la^t$. Once again, letting $m \to 0, \la \to \infty$ with $m \log \la = \la_0$, we get the following:
\[
\alpha^\ast_d \log(\la_0) \leq 
\log\bigg( \int \la_0^{\max(0,1-\sum t_\ell)} \, \mathrm{d} \tau^d(t_1,\ldots,t_d) \bigg) \\
- \frac{d}{2} \log\bigg( \int \la_0^{-\min(t_1,t_2)} \, \mathrm{d} \tau^2(t_1,t_2) \bigg) .
\]
For any fixed $\la_0$, an optimal $\tau$ must satisfy a simple fixed point equation involving the convolution power $\tau^{\ast(d-1)}$. For $\mathrm{div} \in \Nb$ one can divide $[0,1]$ into $\mathrm{div}$ many intervals and search among atomic measures $\tau$ with atom locations at $i/\mathrm{div}$, $i=0,1,\ldots,\mathrm{div}$. It is possible to numerically solve the fixed point equation by an iterative algorithm. Then it remains to tune the parameter $\la_0$. We computed these $1^+$-RSB bounds for $\mathrm{div}=1,2,4,8,\ldots,1024$. Note that $\mathrm{div}=1$ corresponds to the original 1-RSB, while $\mathrm{div}=2$ gives \eqref{eq:1plus_half}. Table \ref{table:deg3_1plus_rsb} shows the results for $d=3$.

\begin{table}[h!] 
\caption{Our $1^+$-RSB bounds for $\alpha^\ast_3$. Note that $\mathrm{div}=1$ corresponds to 1-RSB}.
\centering
\begin{tabular}{rl} 
$\mathrm{div}$ & $1^+$-RSB bound \\
\hline
$1$   & $0.45085965358$ \\ 
$2$   & $0.45085113089$ \\ 
$4$   & $0.45084699561$ \\ 
$8$   & $0.45084570075$ \\ 
$16$  & $0.45084535605$ \\ 
$32$  & $0.45084526847$ \\ 
$64$  & $0.45084524648$ \\ 
$128$ & $0.45084524098$ \\ 
$256$ & $0.45084523960$ \\ 
$512$ & $0.45084523926$ \\ 
$1024$& $0.45084523917$ \\ 
\hline
2-RSB & $0.45078993583$
\end{tabular}
\label{table:deg3_1plus_rsb}
\end{table}

\newpage

\section{Appendix: our best bounds} \label{sec:table}

Below we list our best $r$-RSB bounds of $\alpha^\ast_d$ for each degree $3 \leq d \leq 19$ in the following format: 
$\quad r \quad [n_1, \ldots, n_{r-1}] \quad \mbox{bound} \quad$ 
(see Remark \ref{rem:par_space} for the definition of $n_k$). 

For comparison, we included $r=1$, that is, the 1-RSB bound from \cite{lelarge2018replica} that we improve on.

\bigskip

\begin{tabular}{|lll|} 
\multicolumn{3}{c}{\textbf{degree: 3}} \\
\hline
1 & ~           & $0.450859654$ \\
2 & $[32]$      & $0.450789936$ \\
3 & $[8,4]$     & $0.450786018$ \\
4 & $[8,2,2]$   & $0.450785346$ \\ 
5 & $[4,2,2,2]$ & $0.450785210$ \\
\hline
\multicolumn{3}{c}{\textbf{degree: 4}} \\
\hline
1 & ~         & $0.411194564$ \\
2 & $[18]$    & $0.411100755$ \\ 
3 & $[6,4]$   & $0.411095101$ \\
4 & $[4,3,2]$ & $0.411094131$ \\
\hline
\multicolumn{3}{c}{\textbf{degree: 5}} \\
\hline
1 & ~         & $0.379268170$ \\
2 & $[8]$     & $0.379176250$ \\ 
3 & $[3,3]$   & $0.379170372$ \\
4 & $[2,2,3]$ & $0.379170310$ \\
\hline
\multicolumn{3}{c}{\textbf{degree: 6}} \\
\hline
1 & ~         & $0.352984549$ \\
2 & $[7]$     & $0.352905514$ \\ 
3 & $[4,2]$   & $0.352900232$ \\
4 & $[3,2,2]$ & $0.352899485$ \\
\hline
\multicolumn{3}{c}{\textbf{degree: 7}} \\
\hline
1 & ~       & $0.330884354$ \\
2 & $[5]$   & $0.330821477$ \\ 
3 & $[5,2]$ & $0.330817014$ \\
\hline
\multicolumn{3}{c}{\textbf{degree: 8}} \\
\hline
1 & ~       & $0.311972567$ \\
2 & $[6]$   & $0.311925387$ \\ 
3 & $[3,2]$ & $0.311922227$ \\ 
\hline
\multicolumn{3}{c}{\textbf{degree: 9}} \\
\hline
1 & ~       & $0.295553902$ \\
2 & $[5]$   & $0.295520273$ \\ 
3 & $[2,2]$ & $0.295519497$ \\
\hline
\end{tabular}
\quad
\begin{tabular}{|lll|} 
\multicolumn{3}{c}{\textbf{degree: 10}} \\
\hline
1 & ~       & $0.281128003$ \\
2 & $[5]$   & $0.281105186$ \\ 
3 & $[2,2]$ & $0.281104953$ \\ 
\hline
\multicolumn{3}{c}{\textbf{degree: 11}} \\
\hline
1 & ~     & $0.268324856$ \\
2 & $[7]$ & $0.268310124$ \\
\hline
\multicolumn{3}{c}{\textbf{degree: 12}} \\
\hline
1 & ~     & $0.256864221$ \\
2 & $[5]$ & $0.256855205$ \\
\hline
\multicolumn{3}{c}{\textbf{degree: 13}} \\
\hline
1 & ~     & $0.246529415$ \\
2 & $[6]$ & $0.246524236$ \\
\hline
\multicolumn{3}{c}{\textbf{degree: 14}} \\
\hline
1 & ~     & $0.237149865$ \\
2 & $[4]$ & $0.237147193$ \\
\hline
\multicolumn{3}{c}{\textbf{degree: 15}} \\
\hline
1 & ~     & $0.228589175$ \\
2 & $[4]$ & $0.228587914$ \\
\hline
\multicolumn{3}{c}{\textbf{degree: 16}} \\
\hline
1 & ~     & $0.220736776$ \\
2 & $[4]$ & $0.220736278$ \\
\hline
\multicolumn{3}{c}{\textbf{degree: 17}} \\
\hline
1 & ~     & $0.213501935208$ \\
$1^+$ & ~ & $0.213501905193$ \\
\hline
\multicolumn{3}{c}{\textbf{degree: 18}} \\
\hline
1 & ~     & $0.206809394782$ \\
$1^+$ & ~ & $0.206809390398$ \\
2 & $[2]$ & $0.206809390050$ \\
\hline
\multicolumn{3}{c}{\textbf{degree: 19}} \\
\hline
1 & ~     & $0.2005961242697$ \\
$1^+$ & ~ & $0.2005961242567$ \\
\hline
\end{tabular}

\newpage

\section{Appendix: an overview of the interpolation method} \label{sec:ip}

The interpolation method is a rigorous technique to prove upper bounds for the free energy in various models. It has several variants. Originally it was invented by Guerra \cite{guerra2003broken} in the context of the Sherrington--Kirkpatrick spin glass model. In this section we explain the technique for the hard-core model, omitting the technical details and assuming no statistical physics background. We mainly follow the exposition in \cite{ayre2022lower}, where the closely related problem of the chromatic number was considered, and \cite{panchenko2004bounds}.

Given a finite graph $G=(V,E)$, the \emph{partition function} of the \emph{hard-core model} is defined as 
\begin{equation*}
Z_G = Z_{G,\lambda} \defeq \sum_{\sigma \in  \{0,1\}^V} \prod_{v \in V} \lambda^{\sigma_v} 
\prod_{uv \in E} \bigg( 1 - \ind\big( \{ \sigma_u=\sigma_v=1 \} \big) \bigg) ,  
\end{equation*}
where $\lambda>1$ is a parameter often called fugacity. So $Z_G$ counts $0$-$1$ configurations $\sigma=( \sigma_v )_{v \in V}$ on the vertices with no neighboring $1$'s, that is, $I \defeq \{ v \in V \, : \, \sigma_v =1 \}$ is an independent set counted with weight $\la^{|I|}$. Thus $Z_G$ is simply the sum of these weights for all independent sets\footnote{In fact, we should also work with a soft version of $Z$ (at some positive temperature), where neighboring $1$'s are possible but penalized in the partition function. As the temperature goes to zero (i.e., the penalty increases), we get back the hard-core model in the limit. For the sake of simplicity, we describe the interpolation method using the hard-core model but keep in mind that a rigorous treatment would need positive temperatures.}. Let $\alpha(G)$ denote the independence number of $G$ (i.e., the size of the largest independent set). Using the simple inequality 
$Z_{G,\la} \geq \la^{\alpha(G)}$, 
one can bound the independence number as follows:
\begin{equation*} 
\alpha(G) \leq \frac{\log Z_{G,\la}}{\log \la} ,   
\end{equation*}
which is clearly asymptotically tight for any fixed $G$ as $\la \to \infty$.

We are interested in the asymptotic independence ratio $\alpha^\ast_d$ of the random $d$-regular graph $\Gb=\Gb(N,d)$ as the number of vertices $N$ goes to infinity. It follows from the above that for any $\la$ the \emph{normalized free energy} $F_N \defeq \E \log Z / N$ upper bounds $\alpha^\ast_d \log \la$. More precisely, we have 
\begin{equation*} 
\alpha^\ast_d \leq \lim_{N \to \infty} \frac{\E_{\Gb} \log Z_{\Gb,\la}}{N \log \la} . 
\end{equation*}

The method is based on an ``interpolating'' family of models $G_t$, $t \in [0,1]$, with $G_0$ being our original model (plus a disjoint part), which is then ``continuously transformed'' into $G_1$. The key is to prove that the free energy $\E \log Z_{G_t}$ increases as $t$ goes from $0$ to $1$ by showing that the derivative is nonnegative along the way:
\begin{equation} \label{eq:free_energy_diff}
\frac{\partial \E \log Z_{G_t}}{\partial t} \geq 0 
\quad \forall t \in [0,1].
\end{equation}
We will elaborate on this key part of the proof later in Section \ref{sec:mon}. It implies that $\E \log Z_{G_0} \leq \E \log Z_{G_1}$, which will translate to a bound of the form 
\begin{equation} \label{eq:ip_form}
\E \log Z_\Gb \leq \E \log Y - \E \log Y' + o(N) ,
\end{equation}
where $Y$ and $Y'$ are partition functions that are easier to handle. Next we will describe the models in detail. 

\subsection{Variables and factors} 
The models have two types of nodes: \emph{variable nodes} and \emph{fields} (corresponding to \emph{local fields} in physics). We assign a variable $\sigma_v$ to any variable node $v$ that \emph{ranges over} $\{0,1\}$. When we compute the partition function, the sum runs through all possible configurations $\sigma=(\sigma_v)$ with the weight of a configuration being the product of various (penalty and reward) factors. For example, each $\sigma_v=1$ is rewarded with a $\la>1$ factor, while an edge between two variable nodes $v$,$v'$ forbids that $\sigma_v=\sigma_{v'}=1$, i.e., the factor is 
\[ \big( \sigma_v, \sigma_{v'} \big) \mapsto 1-\sigma_v \sigma_{v'} = 
\begin{cases}
1 & \mbox{if } \sigma_v=0 \mbox{ or } \sigma_{v'}=0 ;\\
0 & \mbox{if } \sigma_v=\sigma_{v'}=1 .\\
\end{cases}
\]

A field $u$ does not have a variable, instead there is a probability distribution $\mu_u$ on $\{0,1\}$ assigned to it. In other words, each field $u$ is labelled with a real number $x_u \in [0,1]$ denoting the probability of $1$: $x_u \defeq \mu_u( \{1\})$. 

If there is an edge between a variable node $v$ and a field $u$, then we use the following factor:
\[ \sigma_v \mapsto 1-x_u \sigma_v = 
\begin{cases}
1 & \mbox{if } \sigma_v=0 ;\\
\mu_u( \{0\}) = 1-x_u & \mbox{if } \sigma_v=1 .\\
\end{cases}
\]

Finally, for an edge between two fields $u$,$u'$ we add the following constant factor (that does not depend on $\sigma$):
\[ 1-x_u x_{u'} .\]

\subsection{The models}
Now we are ready to describe the models $G_t$ and the partition functions $Y$ and $Y'$. 
\begin{itemize}
\item Each model $G_t$ has $N$ variable nodes and $dN$ fields. A variable node has $d$ half-edges, each may be connected to another half-edge or to a field.
\item In $G_t$ there are $(1-t)dN/2$ edges connecting two fields, the remaining $tdN$ fields are connected to half-edges of variable nodes randomly, and the remaining $(1-t)dN$ half-edges are matched randomly, creating $(1-t)dN/2$ edges between variable nodes. 
\item In particular, at $t=0$ we get the disjoint union of a random $d$-regular graph $\Gb$ (over the variable nodes) and $dN/2$ pairs of fields, each pair connected by an edge. Therefore 
\[ \log Z_{G_0} = \log Z_\Gb + \log Y' , \]
where $Y'$ is the partition function of the $dN/2$ ``field edges''.
\item At the other endpoint $t=1$, we have $N$ ``stars'', each containing one variable node connected to $d$ fields. We denote the corresponding partition function by $Y$ and hence write 
\[ \log Z_{G_1} = \log Y .\]
\end{itemize}

These are random models. Note that for $Z_\Gb$ the randomness comes purely from the underlying random graph structure, while for $Y$ and $Y'$ it comes from the random labels $x_u$ of the fields $u$ that we will explain next.

In the simplest scenario one fixes a real number $x \in [0,1]$ and use $x_u=x$ for each $u$. In this setup $Y$ and $Y'$ are actually deterministic and can be expressed as products (with the terms corresponding to the $N$ stars and $dN/2$ field edges, respectively): 
\[ Y = \big( 1 + \la (1-x)^d \big)^N \mbox{ and } 
Y' = \big( 1 - x^2 \big)^{dN/2} .\]
(Note that the model of $Y'$ does not have any variable nodes and the ``sum'' is simply the product of constant factors.) Plugging these into \eqref{eq:ip_form} we get back the replica symmetric bound \eqref{eq:rs}. 

More generally, one may choose each $x_u$ independently from a fixed distribution $\nu$ on $[0,1]$. (It is important to use the same $\nu$ for $Y$ and $Y'$.) The resulting partition functions can be factorized again and we get a more general version of the RS bound:
\begin{multline*}
\alpha^\ast_d \log \la \leq 
\int_{[0,1]^d} \log\big( 1 + \la (1-x_1)\cdots(1-x_d) \big) 
\, \dr\nu(x_1) \cdots \dr\nu(x_d) \\
- \frac{d}{2} 
\int_{[0,1]^2} \log\big( 1 - x_1 x_2 \big) \, \dr\nu(x_1) \dr\nu(x_2) .
\end{multline*}

Next we explain how a seemingly insignificant modification of the method turns this approach into a much more powerful tool and resulting in replica symmetry breaking bounds.

\subsection{A weighting scheme}
For a countable index set $\Ga$ let us fix weights $w_\ga \geq 0$, $\ga \in \Ga$, with $\sum_{\ga \in \Ga} w_\ga = 1$ (essentially a probability distribution on $\Ga$) and an arbitrary collection of random variables $\big( x^\ga \big)_{\ga \in \Ga}$, each $x^\ga$ taking values in $[0,1]$. For each $\ga \in \Ga$ we consider a version $Y_\ga$ of $Y$. To this end we need to take independent copies of the collection $\big( x^\ga \big)$ for all fields $u$: 
\[ \big( x^\ga_u \big)_{\ga \in \Ga} 
\mbox{ has the same joint distribution as }
\big( x^\ga \big)_{\ga \in \Ga} .\]
We set the label of each field $u$ to be $x^\ga_u$ and define $Y_\ga$ to be the corresponding partition function. We define $Y'_\ga$ similarly. Then the following weighted version of \eqref{eq:ip_form} is also true:
\begin{equation} \label{eq:ip_weighted}
\E \log Z_\Gb \leq \E \log \sum_{\ga \in \Ga} w_\ga Y_\ga - 
\E \log \sum_{\ga \in \Ga} w_\ga Y'_\ga + o(N) .
\end{equation}
This weighted version is (potentially) more general but it seems that we lose the crucial property of factorization for the formulas inside the $\log$. There is, however, a ``magical'' (random) choice of the coefficients $w_\ga$ (based on the so-called \emph{Derrida--Ruelle cascades}) for which we still have factorization provided that the collection $\big( x^\ga \big)$ is \emph{hierarchically exchangeable}, which notion was introduced in \cite{austin2014hierarchical}.

For a given $r \geq 1$ we use $\Ga=\Nb^r$ as the countable index set. For any fixed parameters $0< m_1, \ldots, m_r < 1$, there exist random weights $w_\ga$ such that for any given $\eta^{(r)}$ of Theorem \ref{thm:r-rsb} we can define the collection $\big( x^\ga \big)$ in a way that \eqref{eq:ip_weighted} yields the bound in the theorem. 

We define $\big( x^\ga \big)$ using the notations of Section \ref{sec:r-rsb}. For any $1 \leq k \leq r$ and any $\ga_1, \ldots, \ga_k \in \Nb$ we will define a random $\eta^{(r-k+1)}(\ga_1,\ldots,\ga_k) \in \Pc^{r-k+1}$. Since we started with a deterministic $\eta^{(r)}$ in Theorem \ref{thm:r-rsb} (see the remarks after the theorem), in our case each $\eta^{(r)}(\ga_1) \defeq \eta^{(r)}$ will be the same for $k=1$. Given $\eta^{(r-k+1)}(\ga_1,\ldots,\ga_k) \in \Pc^{r-k+1}$, we define 
\[ \eta^{(r-k)}(\ga_1,\ldots,\ga_k,\ga_{k+1}), \, \ga_{k+1} \in \Nb, \]
to be conditionally independent and distributed as %
$\eta^{(r-k+1)}(\ga_1,\ldots,\ga_k)$. Finally, for each $\ga=(\ga_1,\ldots,\ga_r) \in \Nb^r$ we sample $x^\ga$ from $\eta^{(1)}(\ga_1,\ldots,\ga_r)$. Schematically:
\[ \eta^{(r)}=\eta^{(r)}(\ga_1) \, \to \, 
\eta^{(r-1)}(\ga_1,\ga_2) \, \to \, 
\cdots \, \to \, 
\eta^{(1)}(\ga_1,\ldots, \ga_r) \, \to \, 
x^\ga .\]
Now suppose that we have a function $f \colon [0,1]^M \to \Rb$. Let us take $M$ independent copies of the above sampling scheme. For each fixed $\ga \in \Nb^r$ we plug the $M$ copies of $x^\ga$ into $f$ resulting in a random variable $V_\ga$. Then one can choose the weights $w_\ga$ randomly in such a way that 
\[ \E \log \sum_{\ga \in \Nb^r} w_\ga V_\ga 
= \E \log T_r \big( V_{(1,\ldots,1)} \big) ,\]
where $T_r$ is defined analogously to Definition \ref{def:recursive_sampling} \cite[Proposition 2]{panchenko2004bounds}.

We will not elaborate on how the weights $w_\ga$ need to be chosen for general $r$. Instead, we focus on the case $r=1$ which already captures the essence of the method. 

\subsection{One-step RSB}

In this case we simply have $\Ga=\Nb$ and each $\eta^{(1)}(\ga)$ is the same deterministic distribution $\eta^{(1)} \in \Pc^1$. In other words, all field labels $x^\ga_u$ are IID across all nodes $u$ in all models $Y_\ga,Y'_\ga$. 
Next we define the random weights $w_\ga$.
\begin{definition}
Given a real number $0<m<1$, let $\hat{w}_1 \geq \hat{w}_2 \geq \ldots$ be the nonincreasing enumeration of the points generated by a nonhomogeneous Poisson point process on $[0,\infty)$ with intensity function $t \mapsto t^{-1-m}$. The sum $\hat{W} \defeq \sum \hat{w}_\ga$ is finite almost surely. For $\ga \in \Nb$ let  
\[ w_\ga \defeq \hat{w}_\ga \big/ \hat{W} .\]
The distribution of $(w_1,w_2,\ldots)$ is called the \emph{Poisson--Dirichlet distribution}.
\end{definition}
In many statistical physics models the relative cluster sizes are believed to behave as the Poisson--Dirichlet distribution for some $m$. It has the following magical property.
\begin{lemma} \cite[Proposition 1]{panchenko2004bounds}
For any fixed $0<m<1$ let $w_\ga$, $\ga \in \Nb$ be the random weights as above. Then for any IID sequence $X_\ga>0$ with $\E X_1^2 < \infty$ we have 
\[ \E \log \, \sum_{\ga=1}^\infty w_\ga X_\ga = \frac{1}{m} \log \E X_1^m .\]
Note that on the left we take expectation both in $w_\ga$ and in $X_\ga$.
\end{lemma}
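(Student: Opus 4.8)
The plan is to compute the Laplace transform (equivalently, the negative moments / Mellin transform) of the total mass $\hat W = \sum_\ga \hat w_\ga$ of the stable Poisson point process, and then combine it with the independent randomness in the $X_\ga$. First I would exploit the defining property of the Poisson process with intensity $t^{-1-m}\,\dr t$ on $(0,\infty)$: for any nonnegative measurable $g$ vanishing fast enough at infinity, the exponential (Campbell / L\'evy--Khinchine) formula gives $\E \exp\!\big(-\sum_\ga g(\hat w_\ga)\big) = \exp\!\big(-\int_0^\infty (1-e^{-g(t)})\,t^{-1-m}\,\dr t\big)$. Applying this with $g(t) = s t$ yields $\E e^{-s\hat W} = \exp(-C_m s^m)$ for an explicit constant $C_m = \int_0^\infty (1-e^{-t})\,t^{-1-m}\,\dr t = \Gamma(1-m)/m$; i.e.\ $\hat W$ is a one-sided $m$-stable subordinator evaluated at time $C_m$.

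Next I would bring in the $X_\ga$. The key trick is to absorb them into the point process: conditionally on the $X_\ga$, the collection of points $\{\hat w_\ga X_\ga\}$ is again a Poisson process, now with (random) intensity obtained by pushing forward $t^{-1-m}\,\dr t$ under $t \mapsto X_\ga t$; averaging over the IID $X_\ga$ and using the scaling $\int_0^\infty (1-e^{-sXt})\,t^{-1-m}\,\dr t = X^m \int_0^\infty(1-e^{-su})\,u^{-1-m}\,\dr u$, one finds that $\sum_\ga \hat w_\ga X_\ga$ has the same law as $(\E X_1^m)^{1/m}\,\hat W$. Equivalently $\E e^{-s\sum_\ga \hat w_\ga X_\ga} = \exp(-C_m (\E X_1^m)\, s^m)$. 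Now write $\log \sum_\ga w_\ga X_\ga = \log \sum_\ga \hat w_\ga X_\ga - \log \hat W$, take expectations, and observe that by the distributional identity just established the first term equals $\E \log \hat W + \tfrac1m \log \E X_1^m$, so the $\E\log\hat W$ contributions cancel and we are left with $\tfrac1m \log \E X_1^m$, as claimed. The moment hypothesis $\E X_1^2 < \infty$ (indeed $\E X_1^m < \infty$, which is weaker since $m<1$) guarantees all these integrals and expectations are finite and the manipulations are legitimate; one also needs $\E |\log \hat W| < \infty$, which follows from the explicit stable law of $\hat W$.

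The main obstacle, and the place where care is required, is justifying the interchange that lets us "absorb'' the IID weights $X_\ga$ into the intensity measure of the Poisson process, since the $\hat w_\ga$ are a.s.\ summable but the point process itself is infinite near $0$. The clean way is to avoid ever summing divergent quantities: work only with the Laplace functional $\E\exp(-s\sum_\ga \hat w_\ga X_\ga)$, for which the marking theorem for Poisson processes (marking each point $\hat w_\ga$ with an independent $X_\ga$) applies directly and the Campbell formula converges because $1-e^{-sXt} = O(t)$ near $0$. From the equality of Laplace transforms one deduces equality in distribution of $\sum_\ga\hat w_\ga X_\ga$ and $(\E X_1^m)^{1/m}\hat W$, and only then passes to $\E\log(\cdot)$; the cancellation of $\E\log\hat W$ is then a triviality. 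An alternative, if one prefers to stay self-contained, is to truncate the process at level $\eps>0$ (finitely many points a.s.), perform the computation there where everything is a genuine finite sum, and let $\eps \to 0$ using monotone/dominated convergence together with the uniform integrability coming from the second-moment assumption; this is more elementary but messier, so I would present the Laplace-functional argument as the main line.
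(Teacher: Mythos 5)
The paper does not actually prove this lemma; it is quoted verbatim from Panchenko--Talagrand (\cite[Proposition 1]{panchenko2004bounds}) and used as a black box, so there is no ``paper's own proof'' to compare against. That said, your argument is correct and is essentially the standard proof of this fact. The structure is the right one: compute the Laplace transform $\E e^{-s\hat W}=\exp(-C_m s^m)$ via Campbell's formula; use the marking theorem for Poisson processes to show that the image of the marked process $\{(\hat w_\ga,X_\ga)\}$ under $(t,x)\mapsto tx$ is again Poisson with intensity $(\E X_1^m)\,t^{-1-m}\,\dr t$, hence $\sum_\ga\hat w_\ga X_\ga\stackrel{d}{=}(\E X_1^m)^{1/m}\hat W$; then split $\log\sum w_\ga X_\ga=\log\sum\hat w_\ga X_\ga-\log\hat W$, take expectations term by term (this is just linearity; independence of the two terms is not needed, only finiteness of each expectation, which you correctly reduce to $\E|\log\hat W|<\infty$ via the stable law), and watch $\E\log\hat W$ cancel. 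Your observation that $\E X_1^m<\infty$ (rather than $\E X_1^2<\infty$) is what is really needed for this route is also correct; the stronger hypothesis in the statement is a leftover from other uses of the weights in the interpolation argument and is harmless here.

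One phrasing in the middle paragraph is not literally correct and you should not leave it as stated: ``conditionally on the $X_\ga$, the collection $\{\hat w_\ga X_\ga\}$ is again a Poisson process with random intensity.'' Conditioning on the entire mark sequence destroys the Poisson structure, because the $X_\ga$ are attached to specific ranks of the process; there is no deterministic intensity describing $\{\hat w_\ga X_\ga\}$ given $(X_\ga)_\ga$. The correct statement is the unconditional one you give a few lines later: the marked point process $\{(\hat w_\ga,X_\ga)\}$ is Poisson on $(0,\infty)^2$ with intensity $t^{-1-m}\,\dr t\,\nu(\dr x)$ (marking theorem), and its image under $(t,x)\mapsto tx$ is Poisson with the pushforward intensity (mapping theorem); the Laplace functional then gives the distributional identity directly. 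Since you flag exactly this and present the Laplace-functional computation as the ``main line,'' the proof as a whole stands; just drop or rephrase the ``conditionally on $X_\ga$'' sentence so the informal heuristic is not mistaken for the actual argument.
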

Applying the lemma for $X_\ga=Y_\ga$ and also for $X_\ga=Y'_\ga$, the bound \eqref{eq:ip_weighted} turns into 
\begin{multline*}
\E \log Z_\Gb 
\leq \frac{1}{m} \log \E Y_1^m - \frac{1}{m} \log \E \big(Y'_1)^m + o(N)\\
= \frac{1}{m} N \log \E \big( 1+ \la(1-x_1)\cdots(1-x_d) \big)^m 
- \frac{1}{m} \frac{dN}{2}\log \E (1-x_1 x_2)^m + o(N),
\end{multline*}
where $x_1,\ldots,x_d$ are IID with distribution $\eta^{(1)}$. Hence we indeed get back Theorem \ref{thm:r-rsb} for $r=1$.

\subsection{Monotonicity of the free energy} \label{sec:mon}
Now we turn to the final ingredient (the reason why all this provides an upper bound): the fact that the free energy of the model $G_t$ is monotone increasing as $t$ goes from $0$ to $1$. In other words, the derivative \eqref{eq:free_energy_diff} is nonnegative.

In $G_t$ there are three types of edges (based on whether there are $0$, $1$, or $2$ variable nodes among the endpoints) and we defined $G_t$ by prescribing the number of edges for all three types. In fact, it is better to define $G_t$ in a way that there is a small portion of the variable nodes with degree $d-1$. Intuitively it is clear that we have to compare the effect (on the free energy) of the addition of an edge of each of the three types. (See \cite[Section 4.2]{ayre2022lower} for an elegant argument justifying this intuition.) 

Suppose that we have any fixed model on $N$ variable nodes with partition function $Z$, where we distinguish some of the nodes as \emph{cavity nodes} (in our setting they belong to the variable nodes that do not have full degree $d$ but only degree $d-1$). The number of cavity nodes should be small compared to $N$ but should converge to $\infty$ as $N \to \infty$. We want to understand the effect (on $\log Z$) of the addition of a new factor to the model. In our case this will be the addition of either one of the three types of edges:
\begin{itemize}
\item We choose two cavity nodes uniformly and independently and add an edge between them: resulting in a random partition function $Z_{\mathrm{cc}}$. 
\item We add two new fields and add an edge between them: resulting in a random partition function $Z_{\mathrm{ff}}$.
\item We choose a cavity node uniformly and connect it to a new random field: resulting in a random partition function $Z_{\mathrm{cf}}$.
\end{itemize}
What we need to prove is that 
\begin{equation} \label{eq:key_ineq}
\big( \E\log Z_{\mathrm{cc}} - \log Z \big) 
+ \big( \E\log Z_{\mathrm{ff}} - \log Z \big) 
- 2 \big( \E\log Z_{\mathrm{cf}} - \log Z \big) \leq 0 .
\end{equation}

To incorporate the Replica Symmetry Breaking scenario we will have an additional variable $\gamma$: let $\Omega=\{0,1\}^N \times \Ga$ where each $\omega=(\sigma_1,\ldots,\sigma_N,\gamma) \in \Omega$ encodes a configuration of $N$ variables $\sigma_i$ and a \emph{state} $\gamma$ ranging over a countable set $\Gamma$. 

Imagine that at a particular stage of the interpolation we see a certain deterministic model. It is actually not important what the model is; the point is that it assigns a weight $\Psi(\omega)$ to each configuration $\omega \in \Omega$. If we normalize these weights with the corresponding partition function $Z=\sum_{\omega \in \Omega} \Psi(\omega)$, then we get a probability distribution on $\Omega$, called the \emph{Boltzmann distribution}. It is a simple fact that adding a new weight factor $\Psi'(\omega)$ to the model changes the free energy $\log Z$ by $\log \E_\omega \Psi'(\omega)$, where 
$\E_\omega$ means taking expectation w.r.t.\ the Boltzmann distribution. It follows that 
\begin{align*}
\E\log Z_{\mathrm{cc}} - \log Z &= 
\E_{c_1,c_2} \log \E_\omega \big( 1 - \sigma_{c_1} \sigma_{c_2} \big) ;\\
\E\log Z_{\mathrm{ff}} - \log Z &= 
\E_{x_1,x_2} \log \E_\omega \big( 1 - x^\ga_1 x^\ga_2 \big) ;\\
\E\log Z_{\mathrm{cf}} - \log Z &= 
\E_{c_1,x_1} \log \E_\omega \big( 1 - \sigma_{c_1} x^\ga_1 \big) ,
\end{align*}
where $c_1,c_2$ are chosen uniformly and independently from the set $C \subseteq \{1,\ldots,N\}$ of cavities, and $x_1=(x^\ga_1)_{\gamma \in \Gamma}$ and $x_2=(x^\ga_2)_{\gamma \in \Gamma}$ are two independent collections of random variables with the same joint distribution. Then \eqref{eq:key_ineq} follows from the following lemma.
\begin{lemma}
Let $X$ and $Y$ be random $\Omega \to [0,1]$ functions with independent copies $X_1,X_2$ and $Y_1,Y_2$, respectively. Then for any random $\omega \in \Omega$ we have 
\begin{multline*}
\E_{X_1,X_2} \log\bigg( 1- \E_\omega X_1(\omega) X_2(\omega) \bigg) 
+ \E_{Y_1,Y_2} \log\bigg( 1- \E_\omega Y_1(\omega) Y_2(\omega) \bigg) \\
\leq 2 \E_{X,Y} \log\bigg( 1- \E_\omega X(\omega) Y(\omega) \bigg) .
\end{multline*}
\end{lemma}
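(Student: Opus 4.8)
The plan is to run the classical Guerra-type convexity argument: expand the logarithms as power series and collapse everything to a sum of squares. Throughout I write $\langle g\rangle \defeq \E_\omega g(\omega)$ for the average of a function $g\colon\Omega\to\Rb$ over the random $\omega$, and I use that the three quantities sitting inside the logarithms, $\langle X_1X_2\rangle$, $\langle Y_1Y_2\rangle$ and $\langle XY\rangle$, all lie in $[0,1]$ because $X$ and $Y$ are $[0,1]$-valued; as in the application I also take $X$ and $Y$ to be independent of one another, $X_1,X_2$ to be independent copies of $X$, and $Y_1,Y_2$ independent copies of $Y$. For $z\in[0,1]$ one has $\log(1-z)=-\sum_{k\ge1}z^k/k$ (with value $-\infty$ allowed at $z=1$), and after negation every summand below is nonnegative, so Tonelli's theorem lets me interchange $\sum_k$ with all of the expectations. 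Hence the asserted inequality follows once I establish, for each fixed integer $k\ge1$, the ``term-by-term'' estimate
\[
\E_{X_1,X_2}\langle X_1X_2\rangle^k + \E_{Y_1,Y_2}\langle Y_1Y_2\rangle^k \ \ge\ 2\,\E_{X,Y}\langle XY\rangle^k .
\]

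To get this I would linearize the $k$-th powers. Take $k$ independent copies $\omega_1,\dots,\omega_k$ of the random $\omega$, independent of $X$ and $Y$, and write $\vec\omega=(\omega_1,\dots,\omega_k)$. For fixed $X_1,X_2$ one has $\langle X_1X_2\rangle^k=\E_{\vec\omega}\prod_{j=1}^k X_1(\omega_j)X_2(\omega_j)$; averaging over $X_1,X_2$ and using that these are independent copies of $X$, the product over $j$ factorizes: setting $\Phi_X(\vec\omega):=\E_X\prod_{j=1}^k X(\omega_j)\in[0,1]$, we get
\[
\E_{X_1,X_2}\langle X_1X_2\rangle^k \ =\ \E_{\vec\omega}\Bigl(\E_X\prod_{j=1}^k X(\omega_j)\Bigr)^{\!2}\ =\ \E_{\vec\omega}\,\Phi_X(\vec\omega)^2 .
\]
The identical computation gives $\E_{Y_1,Y_2}\langle Y_1Y_2\rangle^k=\E_{\vec\omega}\,\Phi_Y(\vec\omega)^2$, and, using the independence of $X$ and $Y$, $\E_{X,Y}\langle XY\rangle^k=\E_{\vec\omega}\bigl[\Phi_X(\vec\omega)\,\Phi_Y(\vec\omega)\bigr]$.

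Plugging the three identities into the term-by-term estimate, it becomes simply $\E_{\vec\omega}\bigl(\Phi_X(\vec\omega)-\Phi_Y(\vec\omega)\bigr)^2\ge0$, which is trivially true; summing over $k$ with the weights $1/k$ then recovers the lemma. There is no genuinely hard step in this argument: the only points that need care are the bookkeeping behind the interchange of the infinite sum with the expectations (handled by monotone convergence, since the expansion of $\log(1-z)$ on $[0,1]$ has constant sign) and the harmless degenerate case in which some inner average equals $1$ and the corresponding logarithm is $-\infty$ — this only pushes the left-hand side down, and in the actual application the field labels are bounded away from $1$ in any case. Everything else is a direct computation.
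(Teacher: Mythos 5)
Your proof is correct and follows essentially the same route as the paper's: expand $\log(1-z)$ as $-\sum_k z^k/k$, linearize each $k$-th power via $k$ independent replicas $\omega_1,\dots,\omega_k$, factor the averages over the independent copies of $X$ and $Y$, and collapse the three terms into the square $\E_{\vec\omega}\bigl(\Phi_X(\vec\omega)-\Phi_Y(\vec\omega)\bigr)^2\ge 0$. The only differences are cosmetic (notation $\Phi_X$, explicit Tonelli justification), so no further comparison is needed.
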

\begin{proof}
Due to the identity 
\[ \log(1-x) = - \sum_{\ell=1}^\infty \frac{x^\ell}{\ell} ,\]
it suffices to show for each $\ell \geq 1$ that 
\begin{equation} \label{eq:ineq_ell}
\E_{X_1,X_2} \bigg( \E_\omega X_1(\omega) X_2(\omega) \bigg)^\ell 
+ \E_{Y_1,Y_2} \bigg( \E_\omega Y_1(\omega) Y_2(\omega) \bigg)^\ell 
- 2 \E_{X,Y} \bigg( \E_\omega X(\omega) Y(\omega) \bigg)^\ell \geq 0 ,
\end{equation}
which can be easily seen to be equivalent to 
\[ \E_{\omega_1,\ldots,\omega_\ell} \bigg( 
\E_{X} \prod_{i=1}^\ell X(\omega_i) - 
\E_{Y} \prod_{i=1}^\ell Y(\omega_i) \bigg)^2 \geq 0 ,\]
where $\omega_1,\ldots,\omega_\ell$ are independent copies of $\omega$.

Indeed, we may rewrite the first term of \eqref{eq:ineq_ell} as
\begin{multline*}
\E_{X_1,X_2} \E_{\omega_1,\ldots,\omega_\ell} 
\prod_{i=1}^\ell X_1(\omega_i) X_2(\omega_i)
= \E_{\omega_1,\ldots,\omega_\ell} 
\bigg( \E_{X_1} \prod_{i=1}^\ell X_1(\omega_i) \bigg) 
\bigg( \E_{X_2} \prod_{i=1}^\ell X_2(\omega_i) \bigg) \\
= \E_{\omega_1,\ldots,\omega_\ell} 
\bigg( \E_{X} \prod_{i=1}^\ell X(\omega_i) \bigg)^2 .
\end{multline*}
Similar manipulations can be carried out for the two other terms.
\end{proof}

\bibliographystyle{alpha}
\bibliography{refs}

\end{document}